\numberwithin{equation}{section}
\theoremstyle{plain} 
\newtheorem{thm}[equation]{Theorem}
\newtheorem{lem}[equation]{Lemma}
\newtheorem{prop}[equation]{Proposition}
\newtheorem{claim}[equation]{Claim}
\newtheorem{conjecture}{Conjecture}
\theoremstyle{definition}
\newtheorem{defn}[equation]{Definition}
\theoremstyle{remark}
\title[Tunnel-number-one knots with Seifert-fibered Dehn surgeries]{Hyperbolic tunnel-number-one knots with Seifert-fibered Dehn surgeries}
\author{Sungmo Kang}
\email{skang4450@chonnam.ac.kr}
\begin{document}







\begin{abstract}
Suppose $\alpha$ and $R$ are disjoint simple closed curves in the boundary of a genus two handlebody $H$ such that $H[R]$ embeds in $S^3$ as the exterior of a hyperbolic knot $k$(thus, $k$ is a tunnel-number-one knot), and $\alpha$ is Seifert in $H$(i.e., a 2-handle addition $H[\alpha]$ is a Seifert-fibered space) and not the meridian of $H[R]$. Then for a slope $\gamma$ of $k$ represented by $\alpha$, $\gamma$-Dehn surgery $k(\gamma)$ is a Seifert-fibered space. Such a construction of Seifert-fibered Dehn surgeries generalizes that of Seifert-fibered Dehn surgeries arising from primtive/Seifert positions of a knot, which was introduced in \cite{D03}.

In this paper, we show that there exists a meridional curve $M$ of $k$ (or $H[R]$) in $\partial H$ such that $\alpha$ intersects $M$ transversely in exactly one point. It follows that such a construction of a Seifert-fibered Dehn surgery $k(\gamma)$ can arise from a primtive/Seifert position of $k$ with $\gamma$ its surface-slope.
This result supports partially the two conjectures: (1) any Seifert-fibered surgery on a hyperbolic knot in $S^3$ is integral, and
(2) any Seifert-fibered surgery on a hyperbolic tunnel-number-one knot
arises from a primitive/Seifert position whose surface slope corresponds to the surgery slope.
\end{abstract}

\maketitle


\section{Introduction}
\label{Introduction and main results}

A primitive/Seifert knot $k$, which was introduced by Dean \cite{D03}, is represented by
a simple closed curve $\alpha$ lying a genus two Heegaard surface $\Sigma$ of $S^3$ bounding handlebodies $H$ and $H'$ such that $\alpha$ is primitive in one handlebody, say $H'$, and is Seifert in $H$, that is to say, a 2-handle addition $H'[\alpha]$ is a solid torus and $H[\alpha]$ is a Seifert-fibered space and not a solid torus. Such a pair $(\alpha, \Sigma)$ is called a primitive/Seifert position of $k$. Note that a knot may have more than one primitive/Seifert position. Also note that since $H$ is a genus two handlebody, the Seifert condition of $\alpha$ in $H$ indicates that $H[\alpha]$ is either a Seifert-fibered space over the disk with at most two exceptional fibers or a Seifert-fibered space over the M\"{o}bius band with at most one exceptional fiber. The curve $\alpha$ in the former(the latter, resp.) is said to be Seifert-d(Seifert-m, resp.).

To perform Dehn surgeries on $k$, we consider a surface-slope $\gamma$, which is defined to be an isotopy class of $\partial N(k)\cap \Sigma$, where $N(k)$ is a tubular neighborhood of $k$ in $S^3$. Note that $\alpha$ is isotopic to a component of $\partial N(k)\cap \Sigma$ in $\Sigma$ and thus $\alpha$ can represent the surface-slope $\gamma$.
Also since $\alpha$ intersects a meridional curve of $k$ once, the surface-slope $\gamma$ is integral. Then Lemma 2.3 of \cite{D03} implies that $\gamma$-Dehn surgery $k(\gamma)$ on $k$ is either a Seifert-fibered space over $S^2$ with at most three exceptional fibers or a Seifert-fibered space over $\mathbb{R}P^2$ with at most two exceptional fibers.
Note that a connected sum of lens spaces may arise as a Dehn surgery $k(\gamma)$ but due to \cite{EM92} it can be excluded if a primitive/Seifert knot $k$ is hyperbolic.

Primitive/Seifert knots have some properties. Since $\alpha$ is primitive in $H'$, there exists a complete set of cutting disks $\{D_M, D_R\}$ of $H'$ such that $\alpha$ intersects the boundary $M$ of $D_M$ once transversely and is disjoint from the boundary $R$ of $D_R$. Note that such a cutting disk $D_R$ is unique up to isotopy in $H'$. Then it follows
that $M$ can be considered as a meridional curve of $k$ and $H[R]$ is homeomorphic to the exterior of $k$ in $S^3$, which indicates that such a knot $k$ is a tunnel-number-one knot in $S^3$ such that the curve $R$ is the boundary of a cocore of the 1-handle regular neighborhood of a tunnel. Therefore, if $k$ is a primitive/Seifert knot, then there exist three simple closed curves $\alpha, R,$ and $M$ in the boundary of a genus two handlebody $H$ satisfying:

\begin{enumerate}
\item $\alpha$ is Seifert in $H$.
\item $R$ is disjoint from $\alpha$ such that $H[R]$ is homeomorphic to the exterior of $k$ implying that $k$ is a tunnel-number-one knot.
\item $M$ is a meridional curve of $k$ such that $M$ is disjoint from $R$ and $M$ intersects $\alpha$ once transversely implying that the surface-slope is integral and $\alpha$ represents $k$.
\end{enumerate}

In this paper, by taking only the conditions (1) and (2) we generalize a construction of Seifert-fibered Dehn surgeries arising from primitive/Seifert knots. We will show that the conditions (1) and (2) imply the condition (3), and thus this generalization constructing Seifert-fibered Dehn surgeries narrows down to the construction of Seifert-fibered Dehn surgeries arising from primitive/Seifert knots.

More explicitly, we suppose $\alpha$ and $R$ are disjoint simple closed curves in the boundary of a genus two handlebody $H$ such that $H[R]$ embeds in $S^3$ as the exterior of a hyperbolic knot $k$, and $\alpha$ is Seifert in $H$ and not the meridian of $H[R]$.
Since $\alpha$ is disjoint from $R$, we can consider $\alpha$ as a curve representing a slope $\gamma$ in $\partial N(k)$ of $k$ in $S^3$. Then note that
since $\alpha$ is Seifert in $H$, it follows that the $\gamma$-Dehn surgery $k(\gamma)$ is either a Seifert-fibered space over $S^2$ with at most three exceptional fibers or
a Seifert-fibered space over $\mathbb{R}P^2$ with at most two exceptional fibers.

The main result of this paper is the following theorems.

\begin{thm}\label{main result}
Suppose $\alpha$ and $R$ are disjoint simple closed curves in the boundary of a genus two handlebody $H$ such that $H[R]$ embeds in $S^3$ as the exterior of a hyperbolic knot $k$, and $\alpha$ is Seifert in $H$ and not the meridian of $H[R]$.
Then there exists a meridional curve $M$ of $k$ (or $H[R]$) in $\partial H$ such that $\alpha$ intersects $M$ transversely in exactly one point.
\end{thm}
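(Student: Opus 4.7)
The plan is to reformulate the conclusion in terms of the dual handlebody $H' := \overline{S^3 \setminus H}$, and then use the Seifert structure on $H[\alpha]$ together with the hyperbolicity of $H[R]$ to construct the desired meridian. Because $H[R]$ embeds as the exterior of $k$, the complement $H'$ is a genus two handlebody, $(H, H')$ realizes the (Waldhausen-unique) genus-two Heegaard splitting of $S^3$, and $R$ bounds a meridian disk $D_R \subset H'$ arising as the cocore of the $2$-handle in the presentation $H[R] = H \cup_R (D^2 \times I)$; moreover, $k$ is the core of the solid torus $\overline{H' \setminus N(D_R)}$. A simple closed curve $M \subset \partial H$ disjoint from $R$ is a meridional curve of $k$ if and only if $M$ bounds a disk $D_M \subset H'$ disjoint from $D_R$, so the theorem is equivalent to producing such a $D_M$ with $|\partial D_M \cap \alpha| = 1$; equivalently, $\alpha$ is primitive in $H'$ and the primitivity is witnessed by a disk disjoint from $D_R$.

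To construct $D_M$, I would first extract an essential vertical surface $F$ from the Seifert fibration of $H[\alpha]$: the annulus separating the two exceptional fibers in the Seifert-d case, or a vertical M\"{o}bius band or annulus coming from the base orbifold in the Seifert-m case. After isotoping $F$ to minimize $|F \cap D_\alpha|$, where $D_\alpha$ is the $2$-handle disk for $\alpha$, the intersection $\mathcal{F} := F \cap H$, possibly after surgering along disk components of intersection, is a collection of essential planar surfaces in $H$ whose boundary slopes on $\partial H$ are controlled by $\alpha$ and by the regular fiber class of the Seifert fibration.

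The key leverage is that $H$ sits inside the hyperbolic knot exterior $H[R]$, which is anannular, atoroidal, and boundary-incompressible. Hence every component of $\mathcal{F}$ must compress, boundary-compress, or become boundary-parallel in $H[R]$, and any such compressing or boundary-compressing disk absent from $H$ itself must meet the cocore $D_R$ of the $2$-handle attached along $R$. Cutting such a disk along $D_R$ and naturally extending the resulting pieces through the tunnel of $H'$ produces a meridian disk $D_M$ of $H'$ disjoint from $D_R$. Using the minimality of $|F \cap D_\alpha|$ and the vertical-annulus structure of $F$, one shows $|\partial D_M \cap \alpha| = 1$: a count of zero would force $\partial D_M$ to be isotopic on $\partial H$ to $\alpha$, making $\alpha$ the meridian of $H[R]$, contrary to hypothesis; higher counts would violate the minimality of the intersection $|F \cap D_\alpha|$ together with the bookkeeping of the cut-and-paste construction.

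The main obstacle will be the compression analysis. The Seifert-d and Seifert-m cases must be treated separately, because in the latter the vertical essential surface can be a M\"{o}bius band rather than an annulus; within each case multiple configurations arise depending on the position of $F$ relative to $D_\alpha$, on whether $\Phi$ carries an exceptional fiber or not, and on the number of intersection arcs the compressing disk has with $D_R$. Verifying in every subcase that the constructed disk $D_M$ meets $\alpha$ in exactly one point—rather than more—and that it genuinely extends to a meridian disk of $H'$ disjoint from $D_R$ is where the bulk of the technical work will lie.
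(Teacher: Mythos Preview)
Your proposal takes a geometric route that is genuinely different from the paper's combinatorial one, but as it stands it has a real gap at the crucial step, and the outline does not indicate how to close it.

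The problematic passage is the third paragraph. You take a vertical surface $F \subset H[\alpha]$, pass to $\mathcal{F} = F \cap H$, and then use hyperbolicity of $H[R]$ to find a compressing or boundary-compressing disk $\Delta$ for $\mathcal{F}$ in $H[R]$. But $\Delta$ lives in $H[R]$, not in $H'$; the phrase ``cutting such a disk along $D_R$ and naturally extending the resulting pieces through the tunnel of $H'$'' does not describe a well-defined construction that produces an embedded meridian disk of $H'$. There is no general mechanism that converts a compressing disk in $H[R]$ into a nonseparating disk in the complementary handlebody $H'$, and you give no argument that the pieces glue up to anything embedded, let alone to a disk disjoint from $D_R$. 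Separately, the boundary curves of $F$ on $\partial H[\alpha]$ are regular fibers, which need not be disjoint from $R$, so $\mathcal{F}$ need not even be properly embedded in $H[R]$ without further work. Finally, the control $|\partial D_M \cap \alpha| = 1$ is asserted rather than proved: ``higher counts would violate the minimality of $|F \cap D_\alpha|$'' is not an argument, since minimizing $|F \cap D_\alpha|$ constrains $F$, not the unrelated disk $\Delta$ or whatever $D_M$ you build from it. You yourself flag that this is ``where the bulk of the technical work will lie,'' and nothing in the proposal suggests how that work would go.

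For contrast, the paper avoids all of this by never working in $H'$ directly. It invokes Berge's theorem that, once the Heegaard diagram of $R$ with respect to cutting disks of $H$ is connected with no cut-vertex, a meridian of $H[R]$ is obtained from $R$ by surgery along a canonically determined \emph{wave} $\omega$ (vertical if the diagram is nonpositive, horizontal if positive). The problem then reduces to the purely diagrammatic statement $|\omega \cap \alpha| = 1$, which is checked case-by-case using the R-R diagram classification of Seifert-d (rectangular and non-rectangular) and Seifert-m curves. The wave theorem is doing the heavy lifting that your cut-and-paste step is trying to do by hand; without an analogous tool, your approach is missing its engine.
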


As a consequence of Theorem~\ref{main result}, we have the following.

\begin{thm}\label{cor of main result}
Suppose $\alpha$ and $R$ are disjoint simple closed curves in the boundary of a genus two handlebody $H$ such that $H[R]$ embeds in $S^3$ as the exterior of a hyperbolic knot $k$, and $\alpha$ is Seifert in $H$ and not the meridian of $H[R]$, whence for a slope $\gamma$ represented by $\alpha$, $k(\gamma)$ is a Seifert-fibered space. Then $(\alpha, \partial H)$ is
a primitive/Seifert position of $k$ and its surface-slope is $\gamma$ so that the Seifert-fibered Dehn surgery $k(\gamma)$ arises from the primitive/Seifert position $(\alpha, \partial H)$.
\end{thm}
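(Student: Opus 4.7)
The plan is to derive this statement as an essentially direct consequence of Theorem~\ref{main result}: the meridional curve $M$ it produces, together with $R$, will form a complete cutting system of the complementary handlebody in which $\alpha$ is manifestly primitive, and primitive/Seifert follows by combining with the standing Seifert hypothesis on $\alpha$.

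First I would apply Theorem~\ref{main result} to obtain a simple closed curve $M \subset \partial H$ which is a meridional curve of $k$ and meets $\alpha$ transversely in a single point; after a small isotopy one may also assume $M \cap R = \emptyset$. Set $H' := \overline{S^3 \setminus H}$. The identity $S^3 = H[R] \cup N(k) = H \cup (D^2 \times I) \cup N(k)$ presents $H'$ as $N(k)$ together with the $2$-handle $D^2 \times I$, where the $2$-handle is attached to $H$ along the annulus $\partial D^2 \times I$ (an annular neighborhood of $R$ in $\partial H$) and meets $\partial N(k)$ along the pair of disks $D^2 \times \partial I$. Read from the $N(k)$-side, $D^2 \times I$ is therefore a $1$-handle attached to the solid torus $N(k)$, so $H'$ is a genus-two handlebody. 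The cocore $D_R = D^2 \times \{t\}$ of this $1$-handle has boundary isotopic to $R$ on $\partial H$, while a meridian disk of $N(k)$ extends through $H' \setminus N(k)$ to a disk $D_M \subset H'$ bounded by $M$. Cutting $H'$ along $D_R$ returns $N(k)$, and a further cut along $D_M$ produces a ball, so $\{D_R, D_M\}$ is a complete cutting system of $H'$.

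Since $\alpha \cap R = \emptyset$ and $\alpha$ meets $M$ in one transverse point, this cutting system certifies that $\alpha$ is primitive in $H'$. Together with the hypothesis that $\alpha$ is Seifert in $H$, the pair $(\alpha, \partial H)$ then satisfies the definition of a primitive/Seifert position of $k$. For the surface slope, the intersection $|\alpha \cap M| = 1$ lets us isotope $k$ through $N(k)$ so as to lie on $\partial H$ as $\alpha$; then $\partial N(k) \cap \partial H$ becomes an annular neighborhood of $\alpha$ in $\partial H$, whose boundary slope is $[\alpha] = \gamma$.

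The one nontrivial step is the handlebody identification of $H'$ together with the verification that $\partial D_R$ is isotopic to $R$ in $\partial H$ and that $\{D_R, D_M\}$ is non-parallel and fills $H'$; this is where the hypothesis $H[R] = X(k)$ (as opposed merely to $k$ being tunnel-number-one) is genuinely used. All remaining steps are an unwinding of definitions once Theorem~\ref{main result} is available.
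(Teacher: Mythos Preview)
Your proposal is correct and follows essentially the same route as the paper's proof: apply Theorem~\ref{main result} to obtain $M$, observe that the complementary handlebody $H'=\overline{S^3\setminus H}$ has $\{D_R,D_M\}$ as a complete cutting system, and conclude that $\alpha$ is primitive in $H'$. The paper states the handlebody structure of $H'$ and the fact that $R$ bounds a cutting disk in one line, whereas you unpack the decomposition $H'=N(k)\cup(D^2\times I)$ explicitly; this extra detail is fine but not a different argument.
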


\begin{proof} Let $H'$ be the closure of the complement of $H$ in $S^3$. Since $H[R]$ embeds in $S^3$ as the exterior of $k$, $H'$ is a genus two handlebody such that $R$ bounds a cutting disk of $H'$. Since by Theorem~\ref{main result}, there exists a meridional curve $M$ of $k$ (or $H[R]$) in $\partial H$ such that $\alpha$ intersects $M$ transversely in exactly one point, $\alpha$ is primitive in $H'$. Therefore, $(\alpha, \partial H)$ is a primitive/Seifert position of $k$, its surface-slope is $\gamma$, and the Seifert-fibered Dehn surgery $k(\gamma)$ arises from the primitive/Seifert position $(\alpha, \partial H)$, as desired.
\end{proof}

These results support partially the following conjectures.

\begin{conjecture}\label{first conjecture}
Any Seifert-fibered surgery on a hyperbolic knot in $S^3$ is integral.
\end{conjecture}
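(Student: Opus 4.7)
The plan is to reduce the conjecture to the tunnel-number-one setting, where Theorem~\ref{cor of main result} applies directly. The reduction rests on two intermediate claims: \emph{(i)} any hyperbolic knot $k\subset S^3$ carrying a Seifert-fibered surgery $k(\gamma)$ is tunnel-number-one, with exterior $E(k)=H[R]$ for some genus-two handlebody $H$ and cutting-disk curve $R$; and \emph{(ii)} the slope $\gamma$ is realized by a curve $\alpha\subset\partial H$, disjoint from $R$, that is Seifert in $H$. Granted both, Theorem~\ref{cor of main result} upgrades $(\alpha,\partial H)$ to a primitive/Seifert position of $k$ whose surface-slope equals $\gamma$, and surface-slopes are integral by construction.

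For claim \emph{(i)}, I would first argue that $k(\gamma)$ must be a \emph{small} Seifert-fibered space (over $S^2$ with at most three exceptional fibers, or over $\mathbb{R}P^2$ with at most two), using the Cyclic Surgery Theorem, the Finite Surgery Theorem of Boyer--Zhang, sutured-manifold genus bounds, and reducibility results for surgeries on knots in $S^3$, so as to control both the base orbifold and the number of cone points. Small Seifert-fibered spaces are known to have Heegaard genus at most two. The substantive step is then to use a sweepout or thin-position argument, in the spirit of Rubinstein--Scharlemann, to isotope a genus-two Heegaard surface $\Sigma$ of $k(\gamma)$ so that the dual core $K^{\ast}$ of the filling solid torus lies in a spine of one of the two handlebodies. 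Cutting along a neighborhood of $K^{\ast}$ then exhibits $E(k)$ as two genus-two pieces glued along $\Sigma$, which is exactly the tunnel-number-one datum $E(k)=H[R]$.

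For claim \emph{(ii)}, with $E(k)=H[R]$ in hand, the filling $H[R\cup\gamma]$ is the Seifert-fibered space $k(\gamma)$. Pushing the Seifert fibration back across the filling torus should produce on $\partial H$ a curve $\alpha$ representing $\gamma$ such that $H[\alpha]$ inherits a Seifert-fibered structure; some care is needed near exceptional fibers and to avoid the meridian case, but this matches the content of the companion conjecture referenced in the introduction (Conjecture~2 there). With such an $\alpha$ produced, Theorem~\ref{cor of main result} finishes the argument.

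The decisive obstacle is the thin-position/sweepout step in claim \emph{(i)}: aligning a genus-two Heegaard surface of $k(\gamma)$ with the surgery core so that $K^{\ast}$ can be isotoped onto the spine of a handlebody. There is no a priori bridge-number bound on $K^{\ast}$ with respect to an arbitrary Heegaard surface of $k(\gamma)$, and the Heegaard foliation need not respect the Seifert fibration; intersections between $K^{\ast}$ and $\Sigma$ could in principle proliferate in complicated ways. Overcoming this is tantamount to proving that Seifert-fibered fillings on hyperbolic knots are always realized by low-bridge positions in the filled manifold, and this is where essentially all the genuinely new content beyond the results of the present paper would have to be developed.
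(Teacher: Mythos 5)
This statement is labelled a \emph{conjecture} in the paper, and the paper does not prove it: it explicitly says that Theorem~\ref{cor of main result} gives only a \emph{partial} answer to the one remaining case (Seifert-fibered surgeries over $S^2$ with three exceptional fibers), after the lens-space, toroidal, and $\mathbb{R}P^2$-base cases are handled by \cite{CGLS87}, \cite{BZ98}, and \cite{GL95}. So there is no ``paper's own proof'' to compare against, and any complete argument would be substantial new mathematics. Your proposal does not supply one, and the failure is not only the sweepout step you flag yourself.

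The decisive gap is your claim \emph{(i)}: it is false that every hyperbolic knot in $S^3$ admitting a Seifert-fibered surgery has tunnel number one. The paper itself cites counterexamples: the knots of \cite{DMM14} and \cite{EJMM15} are hyperbolic, strongly invertible, admit Seifert-fibered surgeries, and do \emph{not} have tunnel number one (and the knots of \cite{MMM05}, \cite{T07}, \cite{DMM12} are not even strongly invertible). Since isotoping the dual core $K^{\ast}$ onto a spine of one handlebody of a genus-two splitting of $k(\gamma)$ would exhibit a genus-two Heegaard splitting of $E(k)$, i.e.\ would force tunnel number one, your proposed alignment step cannot succeed for these knots; the obstruction is not merely technical. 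Your claim \emph{(ii)} is likewise not available: producing a Seifert curve $\alpha\subset\partial H$ representing $\gamma$ from the fibration of $k(\gamma)$ is essentially the content of Conjecture~\ref{second conjecture}, which the paper states as open (and for which the cited families are evidence of difficulty outside the tunnel-number-one class). In short, the reduction collapses the general conjecture onto the tunnel-number-one case, but that reduction is exactly what is known to fail or to be open, so the proposal does not constitute a proof and does not track anything in the paper, which only establishes Theorems~\ref{main result} and~\ref{cor of main result} under the standing hypothesis that $k$ already has tunnel number one with a Seifert curve $\alpha$ disjoint from $R$ given in advance.
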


\begin{conjecture}\label{second conjecture}
Any Seifert-fiberd surgery on a hyperbolic tunnel-number-one knot
arises from a primitive/Seifert position whose surface slope corresponds to the surgery slope.
\end{conjecture}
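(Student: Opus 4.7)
The plan is to verify Dean's primitive/Seifert definition directly for the pair $(\alpha,\partial H)$, using Theorem~\ref{main result} as the engine supplying the primitive disk on the complementary handlebody. The Seifert condition on $H$ is already part of the hypotheses, so the entire task reduces to producing a complete cutting disk system $\{D_R,D_M\}$ for the complementary handlebody $H':=\overline{S^3\setminus H}$ such that $\alpha$ is disjoint from $\partial D_R$ and meets $\partial D_M$ in exactly one point; this is exactly what makes a Seifert curve in $H$ into a primitive/Seifert curve realizing the surgery.

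First I would verify that $H'$ is itself a genus two handlebody in which $R$ bounds a cutting disk. The hypothesis $H[R]\cong E(k)$ says that attaching a 2-handle to $H$ along $R$ excises $N(k)$ from $S^3$, so the core disk of that 2-handle lies in $H'$ and splits $H'$ into the 2-handle together with the solid torus $N(k)$. This exhibits $H'$ as a solid torus with a 1-handle attached, hence a genus two handlebody, and also exhibits the 2-handle disk $D_R$ as a cutting disk of $H'$ whose boundary $R$ is disjoint from $\alpha$ by assumption.

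Next I would invoke Theorem~\ref{main result} to obtain a meridian $M\subset\partial H$ of $k$ meeting $\alpha$ transversely in a single point. Since $M$ is a meridian of $k$, it bounds a meridional disk of the solid torus $N(k)\subset H'$, and a small isotopy yields a disk $D_M\subset H'$ with $\partial D_M=M$ and $D_M\cap D_R=\emptyset$, so $\{D_R,D_M\}$ is a complete cutting disk system for $H'$. Because $\alpha$ is disjoint from $\partial D_R=R$ and meets $\partial D_M=M$ once transversely, $\alpha$ is primitive in $H'$; together with the hypothesized Seifert condition in $H$ this means $(\alpha,\partial H)$ satisfies Dean's definition of a primitive/Seifert position of $k$. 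The surface-slope of this position is by definition the isotopy class of a component of $\partial N(k)\cap\partial H$, which coincides with the slope $\gamma$ that $\alpha$ already represents inside $\partial H[R]=\partial N(k)$.

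The main obstacle in this program is Theorem~\ref{main result} itself: one must locate a meridional curve of $k$ on the Heegaard surface $\partial H$ realizing intersection number one with the given Seifert curve $\alpha$. Once that meridian is in hand, the argument above is essentially a translation between the two sides of the Heegaard splitting, converting Seifert-d/Seifert-m data in $H$ together with the tunnel disk $R$ in $H'$ into Dean's primitive/Seifert data. In this way the conjectural statement, in the setup where the Seifert structure is visible on a genus two handlebody via a Seifert curve $\alpha$, reduces cleanly to the geometric intersection-number statement of Theorem~\ref{main result}.
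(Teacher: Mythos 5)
The statement you are trying to prove is a \emph{conjecture}, and the paper does not prove it; it explicitly treats it as open (``it is still unknown that there are examples of Seifert-fibered surgeries on hyperbolic tunnel-number-one knots in $S^3$ which do not arise from primitive/Seifert positions''). What your argument actually establishes is Theorem~\ref{cor of main result}, the paper's partial result: \emph{if} the surgery slope $\gamma$ is represented by a simple closed curve $\alpha$ on $\partial H$ that is disjoint from $R$ and Seifert in $H$, then $(\alpha,\partial H)$ is a primitive/Seifert position with surface slope $\gamma$. Your handling of that implication is fine and matches the paper's own proof of Theorem~\ref{cor of main result}: $H'$ is a genus two handlebody with $R$ bounding a cutting disk, Theorem~\ref{main result} supplies the meridian $M$ meeting $\alpha$ once, and primitivity of $\alpha$ in $H'$ follows.

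The genuine gap is everything before that hypothesis. The conjecture starts from an arbitrary Seifert-fibered surgery $k(\gamma)$ on a hyperbolic tunnel-number-one knot, and nothing in your proposal (or in the paper) produces the curve $\alpha$. Concretely: (i) the slope $\gamma$ need not be known to be integral --- that is Conjecture~\ref{first conjecture}, itself open in the relevant case of small Seifert-fibered spaces over $S^2$ with three exceptional fibers; (ii) even an integral slope need not be realized by a simple closed curve on the given Heegaard surface $\partial H$ disjoint from $R$; and (iii) even if $\gamma$ is so realized by some $\alpha$, there is no reason the Seifert fibration of $k(\gamma)$ is ``visible'' in the sense that the 2-handle addition $H[\alpha]$ is itself Seifert-fibered --- the fibration of the closed manifold $k(\gamma)$ could fail to restrict to one on $H[\alpha]$. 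Your closing sentence concedes that the reduction works only ``in the setup where the Seifert structure is visible on a genus two handlebody via a Seifert curve $\alpha$,'' which is precisely the hypothesis that would need to be derived, not assumed. So the proposal is a correct proof of Theorem~\ref{cor of main result} but not a proof of Conjecture~\ref{second conjecture}.
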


Conjecture~\ref{first conjecture} is known to be true for various Seifert-fibered Dehn surgeries on a hyperbolic knot.
Due to the famous result of \cite{CGLS87}, if $k(\gamma)$ is a lens space, then $\gamma$ is integral. Boyer-Zhang~\cite{BZ98} proved that the conjecture is true for toroidal Seifert-fibered surgeries. If a Seifert-fibered surgery $k(\gamma)$ has a projective plane as the base surface, then it contains a Klein bottle, in which
case by Gordon-Leucke~\cite{GL95} $\gamma$ is integral.
Thus the only remaining case is when $\gamma$-Dehn surgery $k(\gamma)$ is a Seifert fibered space over the sphere with three exceptional fibers. Theorem~\ref{cor of main result} gives a partial answer for this case.

Regarding Conjecture~\ref{second conjecture}, there are families of hyperbolic knots admitting Seifert-fibered surgeries which do not arise from primitive/Seifert positions. See \cite{MMM05}, \cite{T07}, \cite{DMM12}, \cite{DMM14}, and \cite{EJMM15}.
All of the knots in \cite{MMM05}, \cite{T07}, and \cite{DMM12} are not strongly invertible. Meanwhile, the knots in \cite{DMM14} and \cite{EJMM15} are strongly invertible but do not have tunnel number one. All of the knots above are not primitive/Seifert knots because any primitive/Seifert knot has tunnel number one and thus are strongly invertible. However, it is still unknown that there are examples of Seifert-fibered surgeries on hyperbolic tunnel-number-one knots in $S^3$ which do not arise from primitve/Seifert positions.

The main idea of proving Theorem~\ref{main result} is to use the main result of \cite{B20}, which is originally introduced in \cite{B93}, saying that a meridian of $H[R]$ can be obtained from $R$ by surgery along a distinguished wave, and the main result of \cite{K20b} claiming that there are two types of R-R diagrams of Seifert-d curves in $H$: rectangular form and non-rectangular form, and there is one type of R-R diagram of Seifert-m curves in $H$.

Some related definitions and properties necessary to prove Theorem~\ref{main result} are provided in Section~\ref{Preliminaries}. In Sections~\ref{Seifert-d rectangular curves} and \ref{Seifert-d non-rectangular curves} we prove Theorem~\ref{main result} when $\alpha$
is Seifert-d with rectangular form and with non-rectangular form respectively. Section~\ref{Seifert-m curves} provides the proof of Theorem~\ref{main result} when $\alpha$ is Seifert-m.\\

\noindent\textbf{Acknowledgement.} This paper is originated from the joint work with John Berge for the project of the classification of hyperbolic primitive/Seifert knots in $S^3$. I would like to express my gratitude to John Berge for his collaboration and support. I would also like to thank Cameron Gordon and John Luecke for their great hospitality while I stayed in the University of Texas at Austin.

\section{Preliminaries}
\label{Preliminaries}

We start with the following lemma, which can be found in \cite{HOT80} or \cite{O79} and shows some possible types of graphs of Heegaard diagrams of simple closed curves in the boundary of a genus two handlebody.

\begin{figure}[tbp]
\centering
\includegraphics[width = 1.0\textwidth]{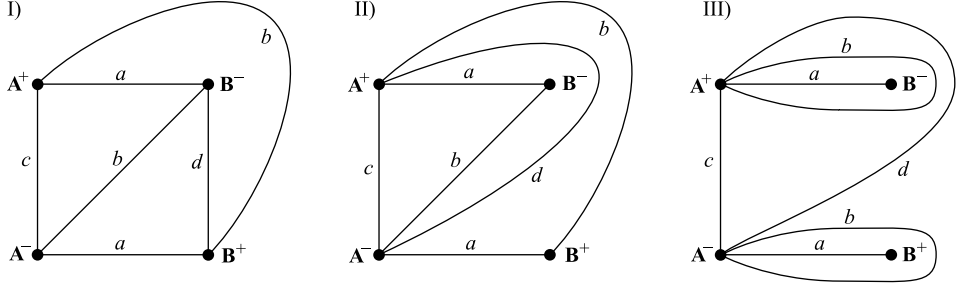}
\caption{The three types of graphs of Heegaard diagrams of simple closed curves on
the boundary of a genus two handlebody $H$ which has cutting disks $D_A$ and $D_B$,
excluding diagrams in which simple closed curves are disjoint from both $\partial D_A$ and $\partial D_B$. }
\label{DPCFig8a-3}
\end{figure}

\begin{lem}
\label{3 types of genus two diagrams}
Let $H$ be a genus two handlebody with a set of cutting disks $\{D_A,D_B\}$ and let $\mathcal{C}$ be a finite set of pairwise disjoint nonparallel simple closed curves on $\partial H$ whose intersections with $\{D_A, D_B\}$ are essential and not both empty. Then, after perhaps relabeling $D_A$ and $D_B$, the Heegaard diagram of $\mathcal{C}$ with respect to $\{D_A, D_B\}$ has the form of one of the three graphs in Figure~\emph{\ref{DPCFig8a-3}}.
\end{lem}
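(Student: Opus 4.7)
The plan is to reduce the statement to a combinatorial analysis of disjoint arcs in a planar surface. First, cut $H$ along $D_A \cup D_B$ to obtain a $3$-ball $B$ whose boundary sphere carries four disk scars $D_A^+, D_A^-, D_B^+, D_B^-$; let $P := \partial B \setminus \operatorname{int}(D_A^+ \cup D_A^- \cup D_B^+ \cup D_B^-)$ denote the resulting four-holed sphere. Because the intersection of each $c \in \mathcal{C}$ with $\partial D_A \cup \partial D_B$ is essential, cutting $c$ along $D_A \cup D_B$ produces a family of essential, properly embedded arcs in $P$; taking all curves of $\mathcal{C}$ together yields a pairwise disjoint collection $\mathcal{A} \subset P$ of essential arcs.

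Next, record the combinatorial type of $\mathcal{A}$ as a labeled multigraph $G$ with two vertices $v_A, v_B$ and one edge per isotopy class of arcs: a class with both endpoints on $D_A^+ \cup D_A^-$ (respectively $D_B^+ \cup D_B^-$) contributes a loop at $v_A$ (respectively $v_B$), while a class with one endpoint on each pair contributes a bipartite edge. Since $P$ is planar and the arcs are disjoint, $G$ inherits an embedding into $S^2$. Moreover, since the arcs must reassemble into the original closed curves in $\partial H$, the number of arc-endpoints on $\partial D_A^+$ equals the number on $\partial D_A^-$, and similarly for $D_B$; this is a balance condition at each vertex of $G$.

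The heart of the argument is to show that, after possibly swapping $D_A$ and $D_B$, $G$ falls into one of the three forms depicted in Figure~\ref{DPCFig8a-3}. A loop at $v_A$ is represented by a simple arc $\alpha \subset P$ whose endpoints lie on $D_A^+ \cup D_A^-$; closing $\alpha$ up by arcs in $D_A^\pm$ produces a simple closed curve in $\partial B = S^2$ which, by the Jordan curve theorem, separates $\partial B$ and thereby constrains the placement of every other arc of $\mathcal{A}$ relative to the two remaining disk scars $D_B^\pm$. The symmetric argument applies to loops at $v_B$. Iterating these separation constraints and invoking the non-parallelism hypothesis forces $G$ to match one of the three prescribed graphs.

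The hardest step will be the simultaneous-waves case: when $\mathcal{A}$ contains both an $A$-wave and a $B$-wave together with bipartite arcs, one must trace how each bipartite arc threads between the two components cut off by a chosen $A$-wave and the two components cut off by a chosen $B$-wave. Ruling out every configuration other than the third graph in Figure~\ref{DPCFig8a-3} requires a careful planar case analysis combining the Jordan curve theorem, disjointness of the arcs, the balance condition, and non-parallelism in order to eliminate both forbidden crossings and duplicate parallelism classes.
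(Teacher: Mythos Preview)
The paper does not actually prove this lemma: it is quoted from the literature, with the sentence preceding the statement pointing to \cite{HOT80} and \cite{O79} for the argument. So there is no ``paper's own proof'' to compare against; you are supplying one where the author chose to cite.

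Your overall strategy---cut $H$ along $D_A\cup D_B$, obtain a four-holed sphere $P$, and classify the isotopy classes of essential arcs that can occur disjointly---is exactly the standard route taken in the cited sources. Two points, however, keep the proposal from being a proof rather than a plan.

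First, the graph $G$ you build has only two vertices $v_A,v_B$, but the three diagrams in Figure~\ref{DPCFig8a-3} are distinguished precisely by which of the \emph{four} scars $D_A^{+},D_A^{-},D_B^{+},D_B^{-}$ the arcs connect (this is why the later discussion of waves in Theorem~\ref{waves provide meridians} refers to vertices $A^{+},A^{-},B^{+},B^{-}$). Your two-vertex encoding collapses, for example, an arc from $D_A^{+}$ to $D_A^{-}$ and an arc from $D_A^{+}$ to $D_A^{+}$ into the same ``loop at $v_A$,'' yet these play very different roles in the trichotomy. You do invoke the $\pm$ distinction later (the balance condition, the Jordan-curve step), so the information is not lost, but the bookkeeping device $G$ as stated is too coarse to carry the case analysis; the argument should be run directly on the four-vertex Whitehead graph from the start.

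Second, and more substantively, the proposal is explicitly a sketch: you write that the simultaneous-waves case ``requires a careful planar case analysis'' and describe the ingredients without carrying it out. That case is where the content lies---one must enumerate the at most six isotopy classes of disjoint essential arcs in $P$ that can coexist, and check that the gluing condition (matching endpoint counts on $D_X^{+}$ and $D_X^{-}$) together with non-parallelism forces one of the three patterns. Until that enumeration is written down, what you have is a correct outline of the proof in \cite{O79}, not a proof.
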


\begin{defn}
[\textbf{cut-vertex}]
If $v$ is a vertex of a connected graph $G$ such that deleting $v$ and
the edges of $G$ meeting $v$ from $G$ disconnects $G$, we say $v$ is a \emph{cut-vertex} of $G$.
\end{defn}

The Heegaard diagram in Figure~\ref{DPCFig8a-3}c either is not connected or has a cut-vertex.

\begin{defn}
[\textbf{Positive Heegaard Diagram}]
A Heegaard diagram is \emph{positive} if the curves of the diagram can be oriented so that all intersections of curves in the diagram are positive. Otherwise, the diagram is \emph{nonpositive}.
\end{defn}

Suppose $R$ is a nonseparating simple closed curve in the boundary of a genus two handlebody $H$ such that $H[R]$ embeds in $S^3$, i.e., $H[R]$ is an exterior of a knot $k$ in $S^3$.
It is shown in \cite{B20}, which is essentially originated from \cite{B93}, that a meridian of $H[R]$ (or $k$) can be obtained from $R$ by surgery along a wave based at $R$. Recall that a wave on the curve $R$ in $\partial H$ is an arc $\omega$ whose endpoints lies on $R$ with the opposite signs. The following is one of the results of \cite{B20}, which shows how to get a meridian of $H[R]$.

\begin{thm}
[\textbf{Waves provide meridians}]
\label{waves provide meridians}
Let $H$ be a genus two handlebody with a set of cutting disks $\{D_A,D_B\}$ and let $R$ be a nonseparating simple closed curve on $\partial H$ such that the Heegaard diagram $\mathbb{D}_R$ of $R$ with respect to $\{D_A,D_B\}$ is connected and has no cut-vertex. Suppose, in addition, that the manifold $H[R]$ embeds in $S^3$. Then $\mathbb{D}_R$ determines a wave $\omega$ based at $R$ such that if $m$ is a boundary component of a regular neighborhood of $ R \cup \omega $ in $\partial H$, with $m$ chosen so that it is not isotopic to $R$, then $m$ represents the meridian of $H[R]$. Furthermore, the wave $\omega$ determined by $R$ can be obtained as follows:

\begin{enumerate}
\item If $\mathbb{D}_R$ is nonpositive, then $\omega$ is a unique vertical wave $\omega_v$ which is isotopic to a subarc of the boundary of one of $D_A$ and $D_B$ with which $R$ has both positive and negative signed intersections.
\item If $\mathbb{D}_R$ is positive, then $\omega$ is a horizontal wave $\omega_h$ such that one endpoint of $\omega_h$ lies on an edge of $\mathbb{D}_R$ connecting vertices $A^+$ and $A^-$, while the other endpoint of $\omega_h$ lies on an edge of $\mathbb{D}_R$ connecting vertices $B^+$ and $B^-$.
\end{enumerate}
\end{thm}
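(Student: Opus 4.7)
The plan is to exploit the embedding $H[R] \hookrightarrow S^3$ to produce an auxiliary cut system on the complementary handlebody, and then to invoke a Whitehead-type theorem to locate the desired wave. Write $H' = \overline{S^3 \setminus \mathrm{int}(H)}$. Since $H[R]$ is the exterior of a knot $k$, the complement $\overline{S^3 \setminus H[R]}$ is the solid torus $N(k)$; reattaching the 2-handle glued to $R$ produces $H' = N(k) \cup_R (D^2 \times I)$, a genus two handlebody in which $R$ bounds a disk. A meridian $\mu$ of $k$ lies on $\partial H[R]$ and may be isotoped into $\partial H$, disjoint from $R$, so that $\{R, \mu\}$ extends to a complete cut system of $H'$. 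Thus the pair of cut systems $\{D_A, D_B\}$ on $H$ and $\{R, \mu\}$ on $H'$ determines a genus two Heegaard diagram $\mathbb{D}$ of $S^3$.

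Next I would invoke the classical Whitehead-type result that any connected, cut-vertex-free genus two Heegaard diagram of $S^3$ admits a wave. The nontrivial point is that this theorem yields a wave on $R \cup \mu$, whereas what is wanted is a wave based at $R$ alone. Since $\mathbb{D}_R$ is already connected and has no cut-vertex, adding $\mu$ to the composite diagram contributes only the edges incident to its intersection points with $\partial D_A \cup \partial D_B$. I would then argue, by a combinatorial pushing-off argument exploiting these structural constraints, that any Whitehead-type wave in $\mathbb{D}$ may be isotoped to have both endpoints on $R$, producing a wave $\omega$ based entirely at $R$.

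To identify $\omega$ explicitly, I would split into cases according to whether $\mathbb{D}_R$ is positive or nonpositive. In the nonpositive case, one of $\partial D_A, \partial D_B$, say $\partial D_A$, has both signs of intersection with $R$; a short subarc of $\partial D_A$ joining two oppositely-signed intersections, pushed slightly onto $\partial H$, is a candidate vertical wave $\omega_v$, and the no-cut-vertex hypothesis yields its uniqueness. In the positive case no such subarc of $\partial D_A$ or $\partial D_B$ can serve as a wave, so the wave must run along $\partial H$ with one endpoint on an edge of $\mathbb{D}_R$ joining $A^+$ to $A^-$ and the other endpoint on an edge joining $B^+$ to $B^-$, yielding a horizontal wave $\omega_h$. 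In either case, the new curve $m$, defined as the boundary component of a regular neighborhood of $R \cup \omega$ not isotopic to $R$, is shown to be a meridian of $H[R]$ by checking that $\{R, m\}$ is a cut system of $H'$, whence $m$ is isotopic to $\mu$ after a sequence of handle slides.

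The principal obstacle I anticipate is the combinatorial passage from an abstract wave in the composite diagram $\mathbb{D}$ to the specific vertical or horizontal wave in $\mathbb{D}_R$: one must rule out waves with endpoints on $\mu$ and verify that the sign-pattern on $R$ alone dictates exactly one of the two forms. This step will probably require careful bookkeeping of edge-types in $\mathbb{D}$, together with uniqueness of the meridian of the knot exterior $H[R]$, to pin down both the wave and the resulting surgery slope.
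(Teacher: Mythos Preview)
The paper does not contain a proof of this theorem. It is quoted as a result of Berge, taken from the preprint \cite{B20} (with origins in \cite{B93}); the sentence immediately preceding the statement is ``The following is one of the results of \cite{B20}, which shows how to get a meridian of $H[R]$,'' and no argument follows. There is therefore no in-paper proof to compare your proposal against.

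For what it is worth, your strategy of completing $H$ to a genus-two Heegaard splitting of $S^3$ with complementary cut system $\{R,\mu\}$ on $H'$ and then invoking a Whitehead/Homma--Ochiai--Takahashi wave theorem is a natural line of attack, and \cite{HOT80} is indeed in the paper's bibliography. However, as a proof the sketch has genuine gaps beyond the one you flag at the end. First, the HOT-type theorem only guarantees a wave based at \emph{some} curve of the four-curve diagram; you assert without argument that it can be taken on the $\{R,\mu\}$ side rather than on $\{\partial D_A,\partial D_B\}$, and then that it can be slid from $\mu$ onto $R$. Second, and more seriously, your closing step is essentially circular: you propose to conclude that $m$ is the meridian ``by checking that $\{R,m\}$ is a cut system of $H'$,'' but nothing in your construction produces a disk in $H'$ with boundary $m$. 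Wave surgery on $R$ in $\partial H$ yields a curve $m$ disjoint from $R$, but establishing that $m$ bounds in $N(k)$ is exactly the content of the theorem and cannot be dispatched as a final remark. (Note also that even if one had a HOT wave based at $R$, the surgered cut system would be $\{m,\mu\}$, not $\{R,m\}$; these are different assertions.) The explicit combinatorial description of $\omega_v$ and $\omega_h$ in parts (1) and (2) suggests that Berge's argument proceeds by a direct analysis of the one-curve diagram $\mathbb{D}_R$ rather than through the full $S^3$ diagram, but absent \cite{B20} one cannot say more.
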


\begin{figure}[tbp]
\centering
\includegraphics[width = 1\textwidth] {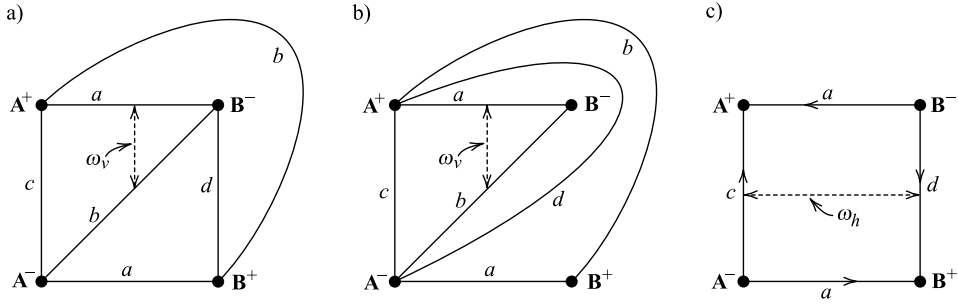}
\caption {A vertical wave $\omega_v$ in a nonpositive Heegaard diagram in a) and b) where $R$ has both positive and negative signed intersections with $D_B$, and a horizontal wave $\omega_h$ in a positive Heegaard diagram in c). They are said to be distinguished
in the sense that they can be used in a surgery on $R$ to obtain a meridian of $H[R]$.}
\label{DPCFig8a-4}
\end{figure}

Figures~\ref{DPCFig8a-4}a and \ref{DPCFig8a-4}b show vertical waves $\omega_v$ when
$R$ has both positive and negative signed intersections with the cutting disk $D_B$ so
that the Heegaard diagram $\mathbb{D}_R$ is nonpositive. Figure~\ref{DPCFig8a-4}c shows a horizon wave $\omega_h$ when $\mathbb{D}_R$ is positive.
Vertical waves and horizontal waves which are used to find a representative of a meridian of $H[R]$ as described in Theorem~\ref{waves provide meridians} are said to be \textit{distinguished}.

Next proposition provides some special type of R-R diagrams of $R$ such that $H[R]$ is nonhyperbolic. For the definition and properties of R-R diagrams, see \cite{K20c}.

\begin{prop}\label{nonhyperbolic R}
Suppose $R$ is a simple closed curve in the boundary of a genus two handlebody $H$ with an R-R diagram of the form shown
in Figure~\emph{\ref{PSFFig3ae}} with $a, b\geq 0$ and $m$, $n$, $s\in \mathbb{Z}$.

If $H[R]$ embeds in $S^3$, then $R$ is either a primitive curve, or a torus or cable knot relator on $H$. Therefore if $k$ is
a knot whose exterior is homeomorphic to $H[R]$, then $k$ is either the unknot, a torus knot or a cable of a torus knot.
\end{prop}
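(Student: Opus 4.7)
The plan is to read off from the R-R diagram of Figure \ref{PSFFig3ae} the word that $R$ represents in the free group $\pi_1(H)$, and then either exhibit $R$ as a primitive element (so that $H[R]$ is a solid torus and $k$ is the unknot) or show that the word has the specific cyclic form of a torus knot relator or a cable knot relator, from which the knot type of $k$ follows immediately.

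First I would dispose of the degenerate cases: when $a = 0$ or $b = 0$, or when the integer parameters $m$, $n$, $s$ satisfy divisibility or sign conditions that cause the associated Heegaard diagram of $R$ with respect to $\{D_A, D_B\}$ to be disconnected or to contain a cut-vertex, i.e.\ to fall under the graph type of Figure \ref{DPCFig8a-3}c. In each such case, the standard analysis of the disconnected/cut-vertex Heegaard diagrams gives that $R$ is primitive in $H$ or bounds a disk on $\partial H$, so that $H[R]$ is a solid torus and $k$ is the unknot.

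In the remaining generic case, the Heegaard diagram of $R$ is connected with no cut-vertex, so Theorem \ref{waves provide meridians} applies. I would read off from the rectangular form of Figure \ref{PSFFig3ae} a distinguished wave $\omega$ based at $R$ (vertical if the diagram is nonpositive, horizontal otherwise) and compute a meridian $M$ of $H[R]$ as the non-isotopic boundary component of a regular neighborhood of $R \cup \omega$ in $\partial H$. The rectangular block structure is precisely what makes this tractable: $\omega$ can be taken inside one of the blocks, and the resulting meridian $M$ inherits an R-R diagram with respect to $\{D_A, D_B\}$ of strictly smaller complexity. Iterating this wave-surgery procedure, I expect either $R$ to become primitive (giving the unknot case) or the pair $(R, M)$ to stabilize in one of the standard normal forms of a torus knot relator or cable knot relator catalogued in \cite{K20c}, from which the conclusion is immediate.

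The main obstacle is the parameter bookkeeping needed to certify that every choice of $(a, b, m, n, s)$ does fall into one of these three buckets. I would address this by introducing a complexity function, for instance $\min(a, b)$ or a twist-weighted refinement thereof, verifying that each application of wave surgery strictly reduces it, and then invoking the explicit low-complexity classification of R-R diagrams of non-hyperbolic type from \cite{K20c} as the base case. The delicate point in this induction will be showing that the rectangular form of the R-R diagram is preserved, or at worst transformed into another diagram already on Berge's list, after each wave surgery, so that the recursion does not leak into a genuinely hyperbolic regime.
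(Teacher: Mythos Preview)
The paper does not prove this proposition at all: its entire proof reads ``This is Theorem 2.4 of \cite{K20a}.'' So there is no methodological comparison to make with the paper itself; your sketch is an attempt at an independent argument.

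That sketch has a genuine structural gap in the ``iteration'' step. A single application of Theorem~\ref{waves provide meridians} to $R$ produces the meridian $M$ of $H[R]$, and that is the end of what wave surgery gives you: $M$ is not a new curve whose $2$-handle addition is again a knot exterior on which the procedure can be repeated. There is no mechanism here by which ``iterating wave surgery'' simplifies $R$ or the pair $(R,M)$, and nothing forces the complexity $\min(a,b)$ of $R$ to drop. What \emph{can} be iterated is a change of cutting disks of $H$ (as in Claims~\ref{claim1} and~\ref{claim2} later in the paper), which replaces the R-R diagram of $R$ by another one of possibly lower complexity; but that is a different operation from wave surgery, and you would need to argue separately that the special form of Figure~\ref{PSFFig3ae} is preserved under it.

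Your treatment of the degenerate cases is also too quick. A disconnected or cut-vertex Heegaard diagram does \emph{not} force $R$ to be primitive or to bound a disk; for instance $R = A^pB^q$ with $p,q>1$ and $\gcd(p,q)=1$ has a diagram with a cut-vertex, and $H[R]$ is a torus knot exterior, not a solid torus. That example is exactly one of the non-primitive outcomes the proposition is meant to capture, so it cannot be swept into the ``unknot'' bin. A correct argument must analyse the explicit word $R$ represents in $\pi_1(H)$ coming from the one-connection/two-connection structure of Figure~\ref{PSFFig3ae}, and identify when that word is primitive, a torus knot relator, or a cable relator --- which is presumably what \cite{K20a} does.
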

\begin{proof}
This is Theorem 2.4 of \cite{K20a}.
\end{proof}

\begin{figure}[tbp]
\centering
\includegraphics[width = 0.55\textwidth]{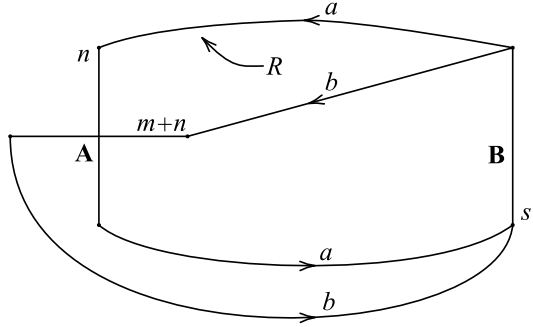}
\caption{R-R diagrams of a simple closed curve $R$ in which $R$ has only one connection on one handle and at most two connections on the other handle. Here $a,b \geq 0$, $\gcd(a,b) = 1$, and $m,n,s \in \mathbb{Z}$.}
\label{PSFFig3ae}
\end{figure}

\section{The case when $\alpha$ is Seifert-$\mathrm{d}$ with rectangular form}
\label{Seifert-d rectangular curves}

The classification theorem of Seifert-d curves in \cite{K20b} says that if $\alpha$ is a Seifert-d curve, then $\alpha$ has an R-R diagram of the forms in Figure~\ref{PSFFig2a1}.
If $\alpha$ has the R-R diagram of the form in Figure~\ref{PSFFig2a1}a (\ref{PSFFig2a1}b, resp.), then we say that $\alpha$ is of a rectangular form (a non-rectangular form, resp.).
In this section and next section we prove Theorem~\ref{main result} for the case when $\alpha$ is of a rectangular form and for the case when $\alpha$ is of a non-rectangular form respectively.

\begin{figure}[tbp]
\centering
\includegraphics[width = 1.0\textwidth]{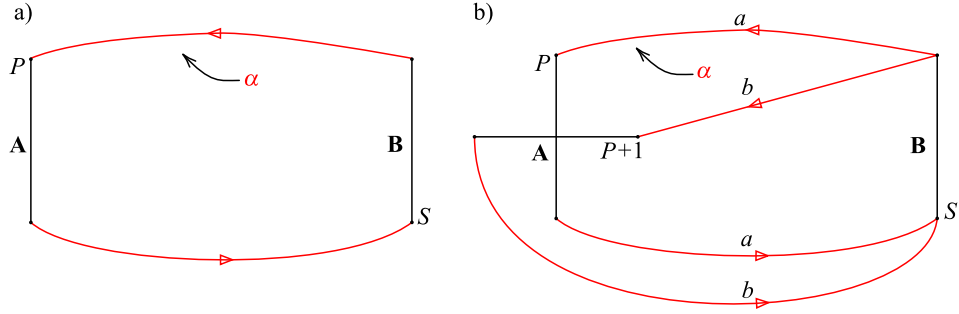}
\caption{If $\alpha$ is a Seifert-d curve in the boundary of a genus two handlebody $H$, then $\alpha$ has an R-R diagram with the form of one of these figures with $P, S > 1$, $a, b > 1$, and $\gcd(a,b) = 1$. If $\alpha$ has an R-R diagram with the form of Figure~\ref{PSFFig2a1}a, we say $\alpha$ has \emph{rectangular form}. Then $H[\alpha]$ is a Seifert-fibered space over $D^2$ with two exceptional fibers of index $P$ and $S$. If $\alpha$ has an R-R diagram with the form of Figure~\ref{PSFFig2a1}b, we say $\alpha$ has \emph{non-rectangular form}. Then $H[\alpha]$ is a Seifert-fibered space over $D^2$ with two exceptional fibers of index $P(a+b)+b$ and $S$.}
\label{PSFFig2a1}
\end{figure}

Suppose $\alpha$ is of a rectangular form, i.e., $\alpha$ has an R-R diagram of the form in Figure~\ref{PSFFig2a1}a.

\begin{prop}
Theorem~\emph{\ref{main result}} holds if $\alpha$ is Seifert-d and has rectangular form.
\label{rectangular prop}
\end{prop}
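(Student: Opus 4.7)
The plan is to exploit the explicit combinatorial structure of the rectangular R-R diagram of $\alpha$ in order to pin down, up to a small list of sub-cases, the possible R-R diagrams of $R$ relative to a suitable pair of cutting disks of $H$, and then in each sub-case to apply the wave-surgery recipe of Theorem~\ref{waves provide meridians} to produce an explicit meridian $M$ of $H[R]$ whose geometric intersection number with $\alpha$ equals one.

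First, I would fix the cutting disks $\{D_A,D_B\}$ of $H$ that exhibit $\alpha$ in the rectangular form of Figure~\ref{PSFFig2a1}a, with parameters $P,S>1$, $a,b>1$, $\gcd(a,b)=1$. Since $R$ is disjoint from $\alpha$, the curve $R$ lives in the complementary planar regions that $\alpha$ cuts out of $\partial H$, and this constrains its Heegaard diagram sharply. Using Lemma~\ref{3 types of genus two diagrams} on the combined diagram of $R\cup\alpha$ (after possibly relabeling), I would enumerate the allowed shapes of $\mathbb{D}_R$. The diagrams in which $R$ takes one of the forms of Figure~\ref{PSFFig3ae} are then eliminated by Proposition~\ref{nonhyperbolic R}, since they would force $k$ to be the unknot, a torus knot, or a cable of a torus knot, contradicting hyperbolicity. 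This leaves a short list of diagrams $\mathbb{D}_R$ in which $\mathbb{D}_R$ is connected and has no cut-vertex, so that Theorem~\ref{waves provide meridians} applies.

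For each surviving sub-case, I would read off the distinguished wave $\omega$ based at $R$: a vertical wave $\omega_v$ when $\mathbb{D}_R$ is nonpositive (isotopic to a subarc of $\partial D_A$ or $\partial D_B$ along which $R$ has both signs), and a horizontal wave $\omega_h$ when $\mathbb{D}_R$ is positive (with endpoints on the two oriented edges of $\mathbb{D}_R$ joining $A^+$ to $A^-$ and $B^+$ to $B^-$). A component $M$ of $\partial N(R\cup\omega)$ not parallel to $R$ then represents the meridian of $H[R]$. Transporting $\omega$ and $M$ back into the R-R diagram of $\alpha$, I would count $|\alpha\cap M|$ directly: the rectangular structure, the coprimality $\gcd(a,b)=1$, and the hypothesis that $\alpha$ is not itself the meridian of $H[R]$ should force the count to be exactly one in each configuration.

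The main obstacle is the case analysis. Even with the rigidity of the rectangular form, there are several qualitatively distinct ways $R$ can thread through the complementary regions of $\alpha$, and whether the distinguished wave is vertical or horizontal depends on the signs of $R$'s intersections with $D_A$ and $D_B$. The delicate part will be to verify, uniformly across these sub-cases, that the single arc $\omega$ one attaches to $R$ to obtain $M$ either contributes a single transverse crossing with $\alpha$ and no others, or cancels all but one of the pre-existing crossings of a nearby $R$-parallel curve with $\alpha$. I expect this to reduce in each case to a small parity-and-geometry computation on the rectangular diagram, but writing it out carefully for every surviving configuration is where the real work of the section lies.
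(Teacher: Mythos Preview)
Your overall strategy matches the paper's: enumerate the possible R-R diagrams of $R$ disjoint from a rectangular $\alpha$, discard the non-hyperbolic ones via Proposition~\ref{nonhyperbolic R}, and then in each surviving case locate the distinguished wave from Theorem~\ref{waves provide meridians} and count its intersections with $\alpha$. The paper organizes the enumeration by whether $R$ shares the $P$- or $S$-connection with $\alpha$ (it cannot share both), and then by the sign pattern of the connection labels, exactly along the lines you sketch.

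There is, however, a genuine gap in your plan. You assert that after eliminating the diagrams of Figure~\ref{PSFFig3ae} via Proposition~\ref{nonhyperbolic R}, ``this leaves a short list of diagrams $\mathbb{D}_R$ in which $\mathbb{D}_R$ is connected and has no cut-vertex, so that Theorem~\ref{waves provide meridians} applies.'' This is false. In the case where $R$ shares the $S$-connection with $\alpha$ and the $A$-handle labels satisfy $R+Q=0$ (forcing $P=2$, $Q=1$, $R=-1$), the Heegaard diagram of $R$ \emph{does} have a cut-vertex, yet $R$ still has three bands of connections on the $B$-handle and is not of the form in Figure~\ref{PSFFig3ae}; hyperbolicity does not rule it out. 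Theorem~\ref{waves provide meridians} is therefore unavailable as stated, and no wave is directly produced. The paper handles these cut-vertex cases (Claims~\ref{claim1} and~\ref{claim2}) by passing to a hybrid diagram and performing a change of cutting disks of $H$---an automorphism of $\pi_1(H)$ sending $A\mapsto AB^{-S}$---which preserves the rectangular form of $\alpha$ but transforms $R$ into a curve whose diagram falls under a previously treated case. For one of the sub-cases this requires an inductive argument on the strictly decreasing quantity $|AB|$ in $R$. This reduction step is not a routine part of the wave-counting you describe, and without it the case analysis does not close.

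A second, smaller point: in the sub-case where $R$ has an $S$-connection and is nonpositive, the vertical wave $\omega_v$ turns out to be \emph{disjoint} from $\alpha$, which is a contradiction (it would make $\alpha$ the meridian) rather than the desired single intersection. So positivity of $R$ is forced in that branch, and you are then committed to locating horizontal waves---whose position depends on which label is maximal in each handle, a further sub-case split you should anticipate.
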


\begin{proof}
In the R-R diagram of $\alpha$ of a rectangular form in Figure~\ref{PSFFig2a1}a, we add two arbitrary bands of connections in each handle, namely $Q$- and $R$-connections in the $A$-handle, and $U$- and $T$-connections in the $B$-handle as shown in Figure~\ref{noPS}.
Note that $P+R=Q$ and $S+U=T$. Here we overuse the letter $R$ meaning a simple closed curve as well as the label of the connection in the $A$-handle.
However, the confusion will obviously be eliminated in the context.

Now we consider adding a simple closed curve $R$ disjoint from $\alpha$.
We can observe that $R$ cannot have both $P$- and $S$-connections, otherwise the curve $R$ is forced to spiral endlessly and cannot be a simple closed curve.
Therefore up to the symmetry of the R-R diagram of $\alpha$, without loss of generality we may assume that $R$ has no $P$-connections.
There are two cases to consider: (1) $R$ has neither $P$-connections nor $S$-connections and (2) $R$ has no $P$-connections and has $S$-connections.

\smallskip
\textbf{Case (1): The curve $\boldsymbol{R}$ has neither $\boldsymbol{P}$-connections nor $\boldsymbol{S}$-connections.}
\smallskip

\begin{figure}[t]
\includegraphics{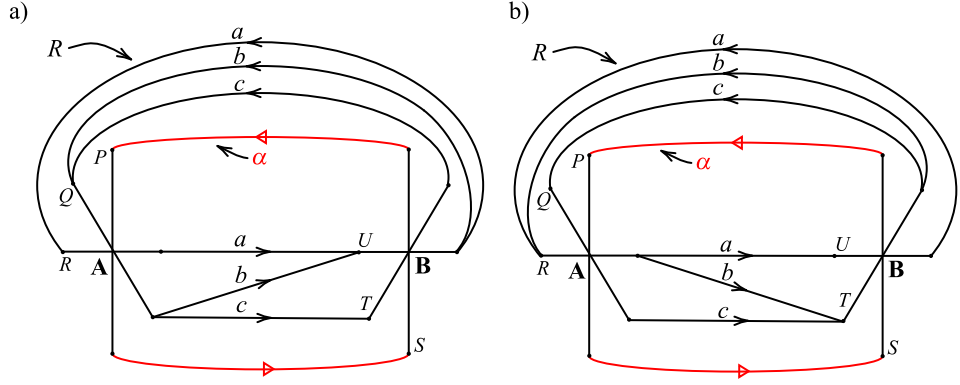}\caption{The R-R diagrams of disjoint nonseparating curves $\alpha$ and $R$ in which $\alpha$ has rectangular form and $R$ has no $P$-connections and no $S$-connections.}\label{noPS}
\end{figure}

A priori there are two possible R-R diagrams of such curves $R$. These appear in Figure~\ref{noPS}. However, examination shows the R-R diagrams of Figure \ref{noPS}a and Figure \ref{noPS}b agree up to homeomorphism and relabeling parameters. So we may suppose $R$ has an R-R diagram of Figure~\ref{noPS}a.

Note that the weights $a, b, c>0$ in Figure~\ref{noPS}, otherwise by Proposition~\ref{nonhyperbolic R}, $H[R]$ is not hyperbolic.

First, suppose $R$ is nonpositive. Since $P,S>1$, none of $R,Q,U,$ and $T$ is $0$. This, when combined with nonpositivity, implies
that the Heegaard diagram of $R$ underlying the R-R diagram is connected and has no cut vertex. Therefore there exists a distinguished vertical wave $\omega_v$
such that by Theorem~\ref{waves provide meridians} a meridian $M$ of $H[R]$ is obtained from $R$ by surgery along $\omega_v$.
It follows immediately from the R-R diagram that the vertical wave $\omega_v$ intersects $\alpha$ transversely at a point.
Therefore a meridian $M$ of $H[R]$ intersects $\alpha$ transversely at a point.

Now we assume that $R$ is positive. The conditions that $a, b, c>0$ and $P,S>1$ implies that $RQ>0$ and $TU>0$, and max$\{|R|, |Q|\}>1$ and max$\{|T|, |U|\}>1$.
Therefore the Heegaard diagram of $R$ is connected and has no cut vertex and by Theorem~\ref{waves provide meridians} there exists a distinguished horizontal wave $\omega_h$
such that a meridian $M$ of $H[R]$ is obtained from $R$ by surgery along $\omega_h$.

\begin{figure}[t]
\includegraphics{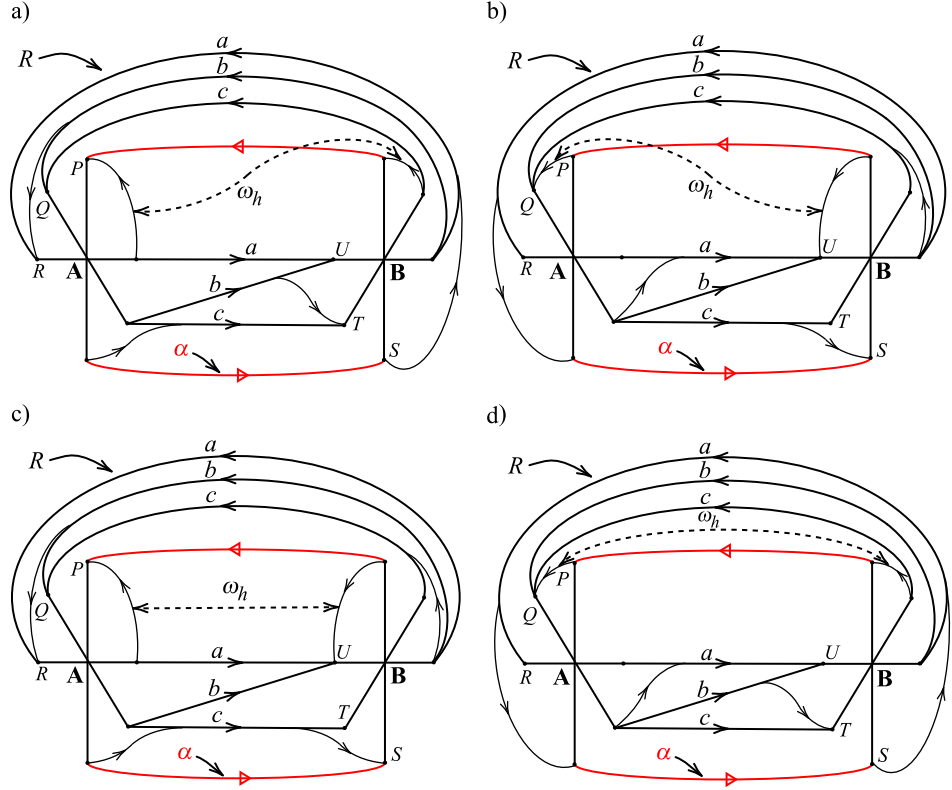}
\caption{R-R diagrams of horizontal waves $\omega_h$ based at the curve $R$ which show how $\omega_h$ depends on the signs of the parameters $R, Q, U,$ and $T$. In \ref{positiverwave}a), $R,Q,P > 0$ and $T,U,-S < 0$. In \ref{positiverwave}b), $-P,R,Q < 0$ and $S,T,U > 0$. In \ref{positiverwave}c),
$R,Q,P > 0$ and $S,T,U > 0$. In \ref{positiverwave}d), $-P,R,Q < 0$ and $T,U,-S < 0$.}
\label{positiverwave}
\end{figure}

Locating the horizontal wave $\omega_h$ in the R-R diagram of $R$
depends on which band of connections of $R$ has maximal labels. See \cite{K20c} for the information on the location of horizontal waves in R-R diagrams. Therefore, it depends on the signs of $R, Q, T,$ and $U$.
There are four cases to consider:
\begin{enumerate}
\item[(a)] $R$, $Q>0$ and $T$, $U<0$;
\item[(b)] $R$, $Q<0$ and $T$, $U>0$;
\item[(c)] $R$, $Q>0$ and $T$, $U>0$;
\item[(d)] $R$, $Q<0$ and $T$, $U<0$.
\end{enumerate}

If $R$, $Q>0$ and $T$, $U<0$, then $Q$ and $U$ are the maximal labels in the $A$- and $B$-handles respectively and $\omega_h$ has an endpoint on a connection in one handle
which borders the band of connections with maximal label. In order to locate
$\omega_h$ in the R-R diagram, we isotope the outermost edge of the $b$ parallel edges entering the $Q$-connection in the $A$-handle and also isotope the outermost edge of the $b$ parallel edges entering the $U$-connection in the $B$-handle as shown
in Figure~\ref{positiverwave}a. Then it follows from \cite{K20c} that $\omega_h$ appears as in Figure~\ref{positiverwave}a. Similarly for the other cases, $\omega_h$
appears as in Figures~\ref{positiverwave}b, \ref{positiverwave}c, and \ref{positiverwave}d.
It follows that in the cases (a) and (b), $|\omega_h \cap \alpha | = 1$ and thus a meridian of $H[R]$ intersects $\alpha$ transversely at a point.
In the cases (c) and (d), $\omega_h \cap \alpha = \varnothing$, which indicates that $\alpha$ is isotopic to a meridian of $H[R]$, a contradiction to the hypothesis of Theorem~\ref{main result}.

\smallskip
\textbf{Case (2): $\boldsymbol{R}$ has $\boldsymbol{S}$-connections but no $\boldsymbol{P}$-connections.}
\smallskip

\begin{figure}[t]
\includegraphics{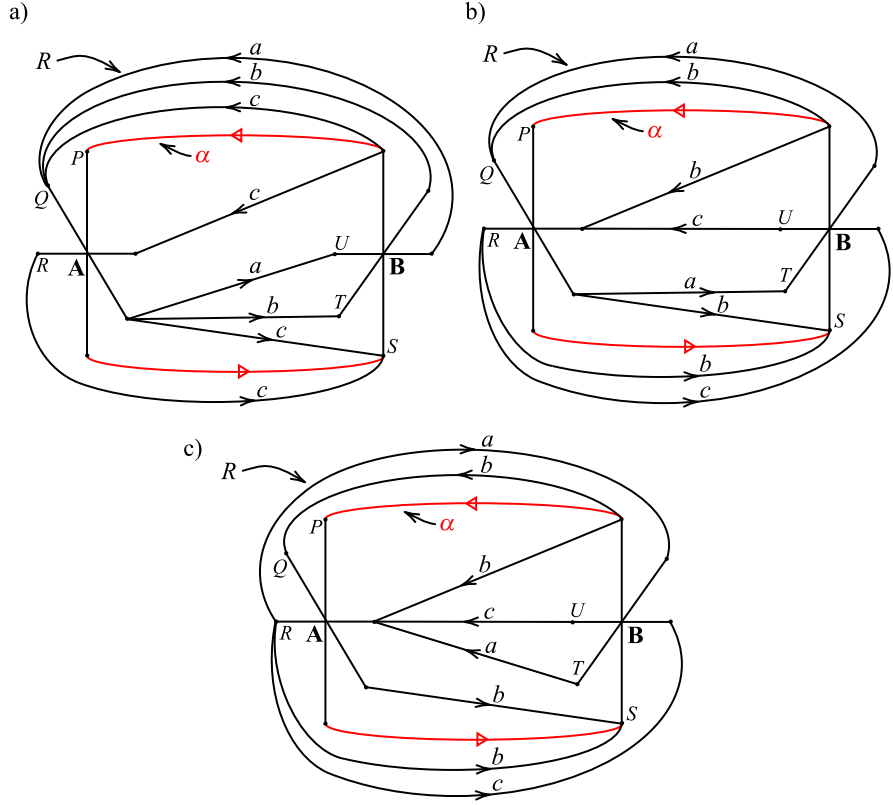}
\caption{R-R diagrams of $\alpha$ and $R$ in which $\alpha$ has rectangular form and $R$ has $S$-connections.}
\label{positivertype}
\end{figure}

There are three possible R-R diagrams of $R$ as shown in Figure~\ref{positivertype}. However, using an orientation-reversing homeomorphism of $H$ and thus of the R-R diagram of $(\alpha, R)$, and relabelling the parameters, we observe that the R-R diagram in Figure~\ref{positivertype}c is equivalent to that of Figure~\ref{positivertype}a.
Therefore, we consider the R-R diagrams of the forms in Figures~\ref{positivertype}a and \ref{positivertype}b.
Since $H[R]$ is hyperbolic, it follows by Proposition~\ref{nonhyperbolic R} that $c>0$ in Figure~\ref{positivertype}a, and $b>0$ in Figure~\ref{positivertype}b.

If $R$ is nonpositive, then since $P, S>1$, it follows that the Heegaard diagram of $R$ is connected and has no cut vertex and thus there exists a distinguished vertical wave $\omega_v$ such that a meridian $M$ of $H[R]$ is obtained from $R$ by surgery along $\omega_v$. Since $c>0$($b>0$, resp.) in Figure~\ref{positivertype}a(\ref{positivertype}b, resp.), $\omega_v$ does not
intersect $\alpha$, a contradiction. Therefore, $R$ is positive.

First, suppose $R$ has the R-R diagram of Figure~\ref{positivertype}a. Since $R$ is positive, $Q, T, U>0$ and $R<0$.

\begin{claim}\label{claim1}
In the diagram of $R$ of Figure~\emph{\ref{positivertype}a}, we may assume that $R+Q \neq 0$.
\end{claim}
\begin{proof}
Suppose $R+Q=0$. Since gcd$(|R|, |Q|) = 1$, $R= -1$, $Q=1$ and $P=2$. Then $\alpha = A^2 B^S$ and the Heegaard diagram of $R$ has a cut vertex. Now we use the argument of the hybrid diagram. Its hybrid diagram of $\alpha$ and $R$ corresponding to the R-R diagram of Figure~\ref{positivertype}a is illustrated in Figure~\ref{fig5-2}a.
For the definition and properties of hybrid diagrams, see \cite{K20c}.

\begin{figure}[tbp]
\centering
\includegraphics[width = 1\textwidth]{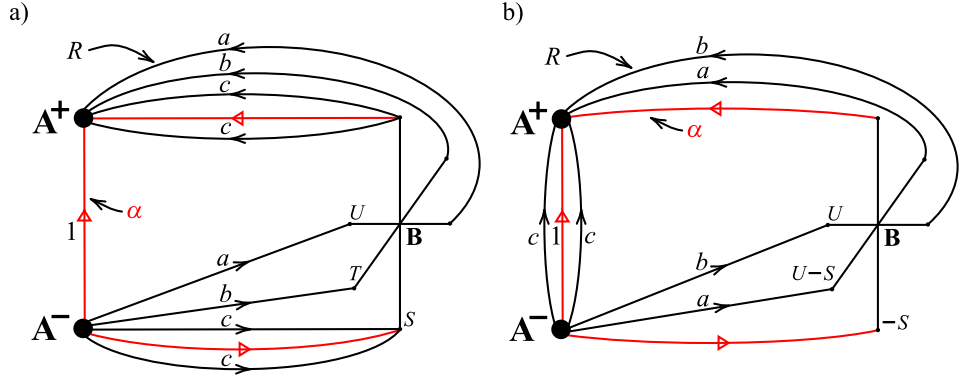}
\caption{The hybrid diagram of Figure~\ref{fig5-2}a corresponds to the R-R diagram of Figure~\ref{positivertype}a. The hybrid diagram in Figure~\ref{fig5-2}b is obtained from the hybrid diagram in
Figure~\ref{fig5-2}a by dragging vertex the $A^-$ of Figure~\ref{fig5-2}a, together with the edges of Figure~\ref{fig5-2}a meeting the vertex $A^-$ of Figure~\ref{fig5-2}a, over the S-connection of the B-handle of Figure~\ref{fig5-2}a. This induces an automorphism of $\pi_1(H)$ which takes $A \mapsto AB^{-S}$.}
\label{fig5-2}
\end{figure}

In its hybrid diagram, we drag the vertex $A^-$
together with the edges of $R$ and $\alpha$ meeting the vertex $A^{-}$ over the $S$-connection on the $B$-handle.
This performance corresponds to a change of cutting disks inducing an automorphism of $\pi_1(H)$ that takes $A \mapsto AB^{-S}$ and leaves $B$ fixed.
With an orientation-reversing homeomorphism of $H$ applied, the resulting hybrid diagram of $\alpha$ and $R$ is depicted in Figure~\ref{fig5-2}b.
It follows from Figure~\ref{fig5-2}b that $R$ has only two bands of connections labelled by $U$ and $U-S$ in the $B$-handle.
For the labels of the bands of connections in the $A$-handle, by chasing the parallel arcs of weight $c$ in the R-R diagram of $R$ of Figure~\ref{positivertype}a,
we observe that the subword $AB^S$ in $R$ appears in the sequence of syllables $\cdots AB^SAB^SA\cdots$.
This implies that after the automorphism taking $A \mapsto AB^{-S}$, $A^2$ does not appear as a single syllable in the word of $R$ in $\pi_1(H)$. On the other hand, $\alpha$ is sent to $A^2B^{-S}$ in $\pi_1(H)$, which is still of a rectangular form. This implies that $\alpha$ and $R$ have no common single syllable, which means that $\alpha$ and $R$ have no common connections. Therefore this case belongs to Case (1) where $R$ has neither $P$-connections nor $S$-connections.
\end{proof}

By Claim~\ref{claim1}, max$\{|Q|, |R|\}>1$. Since $R$ has $S$-connections on the $B$-handle, the Heegaard diagram is connected and has no cut vertex. Therefore it has a distinguished horizontal
wave $\omega_h$ yielding a meridian of $H[R]$. As in the case of (1), locating $\omega_h$ in the R-R diagram of $R$ depends on the sign of $R+Q$ unless $b=0$, in which case $\omega_h$
also depends on the maximal label member of $\{S, U\}$. Figure~\ref{horizontalwave1}, where
the $P$-connection of $\alpha$ is isotoped to the $Q$- and $-R$-connection, shows
$\omega_h$ when $b>0$. In either of the R-R diagrams $\omega_h$ intersects $\alpha$ transversely once. For the case where $b=0$ and $a\neq 0$, we insert $(S-U)$-connection
in the $B$-handle to locate $\omega_h$. Then it is easy to show that in this case $\omega_h$ also intersects $\alpha$ transversely. Note that at least one of $a$ and $b$ must be positive, otherwise $R$ has only two
bands of connections on the $A$-handle and only one band of connections so that by Proposition~\ref{nonhyperbolic R} $H[R]$ is not hyperbolic.

\begin{figure}[t]
\includegraphics[width = 1.0\textwidth]{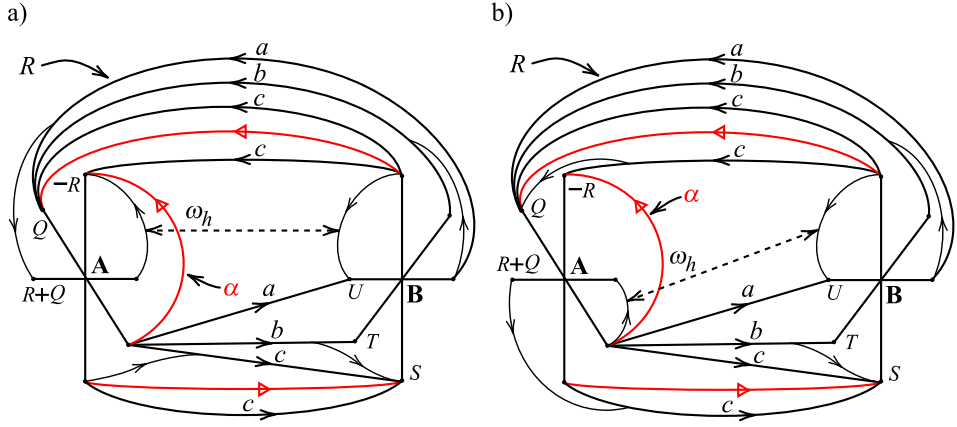}
\caption{Horizontal waves $\omega_h$ in R-R diagrams of $\alpha$ and $R$ when $b > 0$, $R+Q>0$
 in Figure~\ref{horizontalwave1}a, and $R+Q<0$ in Figure \ref{horizontalwave1}b.}
\label{horizontalwave1}
\end{figure}

Now, we assume that $R$ has the R-R diagram in Figure~\ref{positivertype}b. Since $R$ is positive, $Q, T>0$ and $R, U<0$.

\begin{claim}\label{claim2}
In the diagram of $R$ of Figure~\emph{\ref{positivertype}b}, we may assume that $R+Q \neq 0$, or equivalently $(P,Q,R)\neq(2, 1, -1)$.
\end{claim}

\begin{proof}
Suppose $R+Q=0$. Since gcd$(|Q|, |R|)=1$, $(P,Q,R)=(2, 1, -1)$. Then the Heegaard diagram of $R$ has a cut vertex, and $R$ consists of the three types
of two-syllable subwords $AB^S, AB^T$, and $AB^{-U}$ with $|AB^S|=2b, |AB^T|=a$, $|AB^{-U}|=c$. Here $|AB^S|$, for instance, denotes the total number of appearances
of $AB^S$ in $R$ in $\pi_1(H)$. It follows that $|AB|=a+2b+c$ in $R$. Furthermore $\alpha=A^2B^S$ in $\pi_1(H)$, and $\alpha$ and $R$ have no common connections in the $A$-handle.

As in the proof of Claim~\ref{claim1}, since the Heegaard diagram of $R$ has a cut vertex, we perform a change of cutting disks that induces the automorphism of $\pi_1(H)$ taking $A \mapsto AB^{-S}$, and then an orientation-preserving homeomorphism of $H$ inducing
the automorphism $(A,B)\mapsto (B,A^{-1})$ of $\pi_1(H)$. Then $\alpha$ is carried to $A^SB^{2}$ in $\pi_1(H)$, which implies that the R-R diagram of $\alpha$ is also of a rectangular form. The two-syllables $AB^S, AB^T$, and $AB^{-U}$ of $R$ are sent to $B$, $A^{-U}B$, and $A^{T}B$ respectively, which implies that there are only two exponents $-U$ and $T$ with base $A$ in $R$. Thus there are three bands of connections with the label set $(S, T, U)$ in the $A$-handle in the resulting R-R diagram of $\alpha$ and $R$ such that $\alpha$ and $R$ have no common connections in the $A$-handle.
Also we can see that $|AB|$ is reduced strictly to $a+c$ in $R$.

Now the resulting R-R diagram of $\alpha$ and $R$ depends on the determination of the $B$-handle. However, since we have already proved Proposition~\ref{rectangular prop} for all other types of the R-R diagrams of $R$ when $\alpha$ is of a rectangular form, we may assume that it has an R-R diagram of the form in Figure~\ref{positivertype}b with the three labels $(S, T, U)$ in the $A$-handle. If $(S,T,U)\neq(2,1,-1)$ or equivalently $T+U\neq 0$, then the R-R diagram of $R$ satisfies the conclusion of this claim as desired.
If $(S,T,U)=(2,1,-1)$, then we continue to do the process above, which must eventually terminate since $|AB|$ in $R$ is strictly decreasing.
\end{proof}

Now $R+Q \neq 0$ and thus $R$ is connected and has no cut vertex. We apply the similar argument as in the case of R-R diagram of $R$ in Figure~\ref{positivertype}a.
Figure~\ref{horizontalwave2} shows $\omega_h$ when $b>0$. In both of the R-R diagrams $\omega_h$ intersects $\alpha$ transversely once. Similarly when $b=0$, we can show that
$\omega_h$ intersects $\alpha$ transversely once.

\begin{figure}[t]
\includegraphics[width = 1\textwidth]{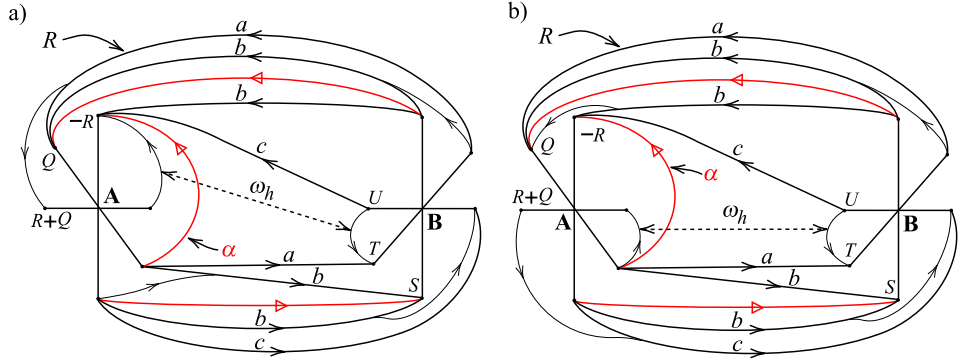}\caption{Horizontal waves $\omega_h$ in the R-R diagram of $R$ when $R+Q>0$ ($R+Q<0$, resp.) in Figure~\ref{horizontalwave1}a (\ref{horizontalwave1}b, resp.) when $b>0$.}\label{horizontalwave2}
\end{figure}

Thus, we have completed the proof of Proposition~\ref{rectangular prop} and therefore Theorem~\ref{main result} when $\alpha$ is Seifert-d and is of a rectangular form.
\end{proof}

\section{Cases in which $\alpha$ is Seifert-$\mathrm{d}$ and has non-rectangular form}
\label{Seifert-d non-rectangular curves}

The goal of this section is to prove Proposition \ref{non-rectangular prop} which shows that Theorem ~\ref{main result} holds in all cases in which $\alpha$ is Seifert-d and has non-rectangular form, i.e., \negthickspace those cases in which $\alpha$ has an R-R diagram with the form of Figure~\ref{PSFFig2a1}b.

\begin{prop}
Theorem~\emph{\ref{main result}} holds if $\alpha$ is Seifert-d and has non-rectangular form.
\label{non-rectangular prop}
\end{prop}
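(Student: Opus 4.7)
The plan is to mirror the strategy used in Proposition~\ref{rectangular prop}, applying Theorem~\ref{waves provide meridians} to produce a meridian $M$ of $H[R]$ as the boundary of a regular neighborhood of $R\cup\omega$ for a distinguished wave $\omega$, and then reading off $|\omega\cap\alpha|$ directly from the R-R diagram. First I would enumerate, up to the symmetries of the non-rectangular R-R diagram of $\alpha$ in Figure~\ref{PSFFig2a1}b, all R-R diagrams of a simple closed curve $R$ disjoint from $\alpha$. Because $\alpha$ in non-rectangular form already occupies both bands of connections on the $A$-handle with weights $a$ and $b$ (coupled so that gcd$(a,b)=1$ and $a,b>1$) and both parameters $P,S>1$, the combinatorial restriction that a simple closed curve $R$ cannot spiral endlessly forbids $R$ from using certain pairs of connections. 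This should cut the list of possible R-R diagrams of $R$ down to a small, manageable family, organized by which of the $P$-, $S$-, $a$- and $b$-connections of $\alpha$ the curve $R$ runs parallel to.

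For each such R-R diagram I would apply Proposition~\ref{nonhyperbolic R} to discard the configurations in which $R$ has only one connection on one handle and at most two on the other, as these force $k$ to be the unknot, a torus knot, or a cable of a torus knot and hence non-hyperbolic. In the remaining cases the conditions $P,S>1$ and the positivity of the relevant weight parameters ensure that every label of a band of connections of $R$ is nonzero and that the underlying Heegaard diagram of $R$ is connected and cut-vertex-free; then Theorem~\ref{waves provide meridians} produces a distinguished wave $\omega_v$ or $\omega_h$. If $R$ is nonpositive the vertical wave $\omega_v$ lies along a subarc of $\partial D_A$ or $\partial D_B$ where $R$ has mixed signs, and in every surviving configuration one checks from the picture that $\omega_v$ crosses $\alpha$ exactly once. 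If $R$ is positive, locating the horizontal wave $\omega_h$ requires a case split on which labels in each handle are maximal (exactly as in Figure~\ref{positiverwave}) and on the signs of sums of connection labels; in each subcase one reads $|\omega_h\cap\alpha|=1$, except for configurations where $|\omega_h\cap\alpha|=0$, which force $\alpha$ to be meridional and are ruled out by hypothesis.

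The main obstacle, as in the rectangular case, will be the exceptional configurations where some sum of connection labels of $R$ vanishes (the analogues of $R+Q=0$ in Claims~\ref{claim1} and~\ref{claim2}), because then the Heegaard diagram of $R$ acquires a cut vertex and Theorem~\ref{waves provide meridians} does not directly apply. I would handle these by passing to the hybrid diagram and performing a change of cutting disks that induces an automorphism of $\pi_1(H)$ of the form $A\mapsto AB^{\pm S}$ or similar, possibly followed by an orientation-preserving homeomorphism swapping the handles. The delicate point specific to the non-rectangular case is that $\alpha$ is now of the form $(A^P B^S)^a (A^P B^S A^P)^b$-type rather than $A^P B^S$, so one must verify that after each such automorphism $\alpha$ either remains of non-rectangular form (with strictly smaller complexity) or is transformed back to rectangular form, in which case Proposition~\ref{rectangular prop} applies. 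I expect to define a complexity for $R$ (e.g.\ the total count $|AB|$ of occurrences of the syllable $AB$ in the word of $R$, or equivalently the total weight of the connections) and to show that each cut-vertex reduction strictly decreases this complexity, so that the procedure terminates in finitely many steps at a diagram to which either the direct wave argument or Proposition~\ref{rectangular prop} already applies.
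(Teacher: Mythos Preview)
Your plan is a reasonable extrapolation of the rectangular-case strategy, but the paper proceeds along a genuinely different and much shorter route that avoids the entire R-R enumeration.

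The paper's key idea is to introduce a separating curve $\Gamma$ on $\partial H$ representing $AB^{S}A^{-1}B^{-S}$, disjoint from $\alpha$, which cuts $\partial H$ into two once-punctured tori $F$ and $F'$ with $\alpha\subset F$. One then argues (Claim~\ref{claim3}) that $R$ must meet $\Gamma$ essentially; otherwise $R$ would lie in $F$ (forcing $R$ parallel to $\alpha$) or in $F'$ (forcing $H[R]$ nonhyperbolic or $\alpha$ meridional). Next, cutting $F$ along $\partial D_A\cap F$ and $\partial D_B\cap F$, the complementary regions of $\alpha$ in $F$ are all rectangles except for two hexagons, and every properly embedded arc of $R\cap F$ disjoint from $\alpha$ must traverse \emph{all} of the rectangles. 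Since $a+b\geq 2$ and $c=(a+b)(P-1)+b>a+b$, this forces syllables $A^m$, $B^n$ with $|m|,|n|>1$ to appear in the cyclic word of $R$, so the Heegaard diagram of $R$ is automatically connected with no cut vertex. Thus the distinguished wave $\omega$ always exists, with no exceptional cases. Finally (Claim~\ref{walpha1}), any intersection point of $\omega$ with $\alpha$ lies in a rectangular face and is paired with an endpoint of $\omega$ in that same face; since $\omega$ has only two endpoints and its two intersections with $\alpha$ would have to carry opposite signs (by the wave condition), one gets $|\omega\cap\alpha|=1$.

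Compared with your plan, the paper's argument buys two things. First, it eliminates the cut-vertex case entirely: the automorphism-and-complexity-descent machinery you propose is never needed here. Second, it replaces the case-by-case location of $\omega_v$ or $\omega_h$ in each R-R configuration by a single uniform topological argument in $F$. Your approach could in principle be pushed through, but the step you yourself flag as delicate---tracking how $\alpha$ transforms under automorphisms like $A\mapsto AB^{\pm S}$ when $\alpha$ has non-rectangular form---is genuinely problematic: unlike $A^PB^S$ or $AB^SA^{-1}B^S$, the non-rectangular $\alpha$ does not have an evident simple behavior under such substitutions, and there is no reason to expect it to land back in rectangular or non-rectangular Seifert-d form. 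The paper sidesteps this entirely by showing that the situation requiring it never arises.
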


\begin{figure}[t]
\includegraphics[width = 0.55\textwidth]{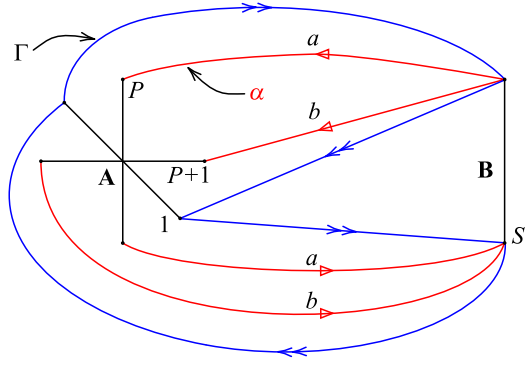}
\caption{An R-R diagram obtained from Figure \ref{PSFFig2a1}b by adding a separating simple closed curve $\Gamma$, disjoint from $\alpha$, to Figure \ref{PSFFig2a1}b so that $\Gamma$ represents $AB^SA^{-1}B^{-S}$ in $\pi_1(H)$. Here $P,S > 1$ with $a,b > 0$, and $\gcd(a,b) = 1$.}
\label{alphagamma}
\end{figure}

\begin{proof}
Note it is possible to add a separating simple closed curve $\Gamma$ to the R-R diagram of $\alpha$ in Figure~\ref{PSFFig2a1}b so that the resulting R-R diagram of $\alpha$ and $\Gamma$ has the form of Figure \ref{alphagamma}. Then $\Gamma$ represents $AB^SA^{-1}B^{-S}$ in $\pi_1(H)$, and $\Gamma$ separates $\partial H$ into two once-punctured tori $F$ and $F'$ with $\alpha \subset F$.

\begin{claim}\label{claim3}
The curve $R$ in $\partial H$ has essential intersections with $\Gamma$.
\end{claim}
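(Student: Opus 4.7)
The plan is to argue by contradiction: assume $R$ can be isotoped to be disjoint from $\Gamma$, so that $R$ lies entirely in one of the two once-punctured tori $F$ or $F'$, and dispatch each case.

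\emph{Case $R\subset F$.} Because $H[R]$ has connected torus boundary, $R$ is essential and nonseparating on $\partial H$; in particular $R$ is neither parallel to the separating curve $\Gamma=\partial F$ nor inessential in $F$. In a once-punctured torus, any two disjoint essential non-boundary-parallel simple closed curves are isotopic, so $R$ is isotopic to $\alpha$ in $F$, forcing $H[R]=H[\alpha]$ as 3-manifolds. This contradicts the hypothesis that $H[\alpha]$ is Seifert-fibered while $H[R]$ is the exterior of a hyperbolic knot.

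\emph{Case $R\subset F'$.} Here I would read the R-R diagram of $R$ with respect to $\{D_A,D_B\}$ directly off Figure~\ref{alphagamma}. Since $F'$ is the once-punctured torus bounded by $\Gamma=AB^SA^{-1}B^{-S}$ lying on the side of $\Gamma$ opposite to $\alpha$, the syllables in $\pi_1(H)=F(A,B)$ available to an essential simple closed curve in $F'$ are restricted: I expect $R$ to have at most one band of connections on one of the two handles and at most two bands on the other, i.e., an R-R diagram of the form of Figure~\ref{PSFFig3ae}. Proposition~\ref{nonhyperbolic R} then forces $k$ to be the unknot, a torus knot, or a cable of a torus knot, contradicting the hypothesis that $k$ is hyperbolic.

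The delicate step is this second case: one must identify $F'$ concretely in Figure~\ref{alphagamma} and verify, by enumerating how $R$ can wind around the two ``handles'' of $F'$, that every essential simple closed curve in $F'$ does yield an R-R diagram matching Figure~\ref{PSFFig3ae}. This is the main technical obstacle, but it is a finite bookkeeping task in the R-R calculus rather than a conceptual one.
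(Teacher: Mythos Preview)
Your case $R\subset F$ is handled exactly as in the paper, and the overall contradiction strategy is the same. The gap is in the $R\subset F'$ case: your expectation that every essential nonseparating simple closed curve in $F'$ has an R-R diagram of the shape in Figure~\ref{PSFFig3ae} is not correct as stated, so Proposition~\ref{nonhyperbolic R} alone does not finish the argument.

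Concretely, reading off Figure~\ref{alphagamma}, a curve $R\subset F'$ always has exactly one band of connections on the $A$-handle (labelled $1$), but on the $B$-handle it can pick up an $S$-connection running parallel to $\Gamma$ \emph{in addition to} the two ``outer'' bands. In that situation $R$ has three bands on the $B$-handle and does not match Figure~\ref{PSFFig3ae}, so Proposition~\ref{nonhyperbolic R} does not apply. The paper disposes of this sub-case by a different mechanism: the presence of the $S$-connection forces the Heegaard diagram of $R$ to be nonpositive (and connected without cut-vertex), so Theorem~\ref{waves provide meridians} supplies a distinguished vertical wave $\omega_v$; one then checks on the diagram that $\omega_v\cap\alpha=\varnothing$, making $\alpha$ isotopic to the meridian of $H[R]$, contrary to hypothesis. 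Only after the $S$-connection is excluded does $R$ have at most two bands on the $B$-handle, and then Proposition~\ref{nonhyperbolic R} applies as you intended. There is also the degenerate sub-case where $R$ misses the $A$-handle entirely; then $R=B^S$ with $S>1$, so $H_1(H[R])$ has torsion and $H[R]$ cannot embed in $S^3$ --- this is ruled out before Proposition~\ref{nonhyperbolic R} is invoked, not by it. So your ``finite bookkeeping'' in $F'$ really splits into three sub-cases, and one of them needs the wave argument rather than Proposition~\ref{nonhyperbolic R}.
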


\begin{proof}[Proof of Claim \emph{\ref{claim3}}]
Suppose $R$ has no essential intersections with $\Gamma$. Then $R$ lies completely in $F$ or completely   in $F'$.

If $R$ lies completely in $F$, then $\alpha$ and $R$ are isotopic in $\partial H$, but this is impossible since $H[R]$ is hyperbolic, while $H[\alpha]$ is Seifert-fibered.

On the other hand, suppose $R$ lies completely in $F'$. If $R$ has no connections in the $A$-handle, then $R=B^S$ in $\pi_1(H)$, which is a contradiction to that $H[R]$ embeds as a knot exterior in $S^3$ and thus $H_1(H[R])$ is torsion-free. It follows that $R$ has a connection in the $A$-handle and Figure~\ref{alphagamma} implies that $R$ has only one band of connections labeled by 1 in the $A$-handle. If $R$ has a $S$-connection in the $B$-handle, then the Heegaard diagram of $R$ is nonpositive, is connected and has no cut vertex. So there exists a distinguished vertical wave $\omega_v$
yielding a meridian of $H[R]$. It is easy to see from the R-R diagram of $\alpha$ that $\omega_v$ does not intersect $\alpha$, a contradiction. Therefore, $R$ has no $S$-connections and thus at most two bands of connections in the $B$-handle, implying by Proposition~\ref{nonhyperbolic R} that $H[R]$ is nonhyperbolic, a contradiction.

It follows $R$ has essential intersections with $\Gamma$.
\end{proof}

Next, consider Figure \ref{gamma1} which shows $F$ cut open along two properly embedded arcs in $F$ parallel to $\partial D_A \cap F$ and  $\partial D_B \cap F$. Note that since $\partial D_A \cap F$ is a single connection in $F$, and $|\alpha \cap \partial D_A|=(a+b)P+b$, one has $c=(a+b)(P-1)+b$. And therefore, since $a+b \geq 2$, and $P > 1$, one has $c > a+b > 2$.

\begin{figure}[t]
\includegraphics{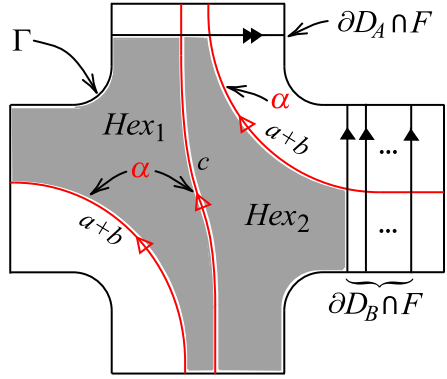}\caption{Configuration of $\alpha, \partial D_A$ and $\partial D_B$ in $F$.}\label{gamma1}
\end{figure}

The simple closed curve $\alpha$ together with the arcs of $\partial D_B \cap F$ and the arc of $\partial D_A \cap F$ cut $F$ into a number of faces, each of which is a rectangle, except for the pair of hexagonal faces $Hex_1$ and $Hex_2$, shown as shaded regions in Figure~\ref{gamma1}. Now it is easy to see that any connection in $F$ disjoint from $\alpha$ traverses each of the above rectangles. Since $R$ is disjoint from $\alpha$, and $R$ has essential intersections with $F$, $R \cap F$ contains such connections. Then, because $a+b \geq 2$ and $c > a+b$, we see that $A^m$ and $B^n$ appear in the cyclic word which $R$ represents in $\pi_1(H)$ with $|m|, |n| >1$. It follows that the Heegaard diagram $\mathcal{D}$ of $R$ with respect to $\{\partial D_A, \partial D_B\}$ is connected and has no cut vertex. Therefore the invariant arc $\omega$ promised by \cite{B20} appears in $\mathcal{D}$ as a distinguished wave based at $R$.

\begin{claim}\label{walpha1}
$\omega$ intersects $\alpha$ transversely in one point.
\end{claim}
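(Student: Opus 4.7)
The plan is to split the argument into two cases according to whether the Heegaard diagram $\mathcal{D}$ of $R$ with respect to $\{\partial D_A, \partial D_B\}$ is nonpositive or positive, exactly paralleling the strategy of Proposition~\ref{rectangular prop}. The non-rectangular form of $\alpha$ will be handled by exploiting the rectangle-plus-hexagon decomposition of the once-punctured torus $F$ supplied by Figure~\ref{gamma1}, together with the counts $|\alpha \cap \partial D_A| = (a+b)P+b$ and $c=(a+b)(P-1)+b > a+b$.

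First I would use the decomposition of $F$ to control where arcs of $R \cap F$ can lie. Because $\alpha \subset F$ cuts $F$ into rectangles together with the two hexagons $Hex_1$ and $Hex_2$, every arc of $R \cap F$ disjoint from $\alpha$ is forced to run essentially parallel to $\alpha$ across one of the rectangular faces; the only arcs whose local picture near $\alpha$ can differ from a pure parallel copy are those that traverse a hexagon. This pins down, up to the signs of the bands of connections, the combinatorial picture of $\mathcal{D}$ near $\partial D_A$ and $\partial D_B$.

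In the nonpositive case, the distinguished wave $\omega = \omega_v$ promised by Theorem~\ref{waves provide meridians} is isotopic to a subarc of $\partial D_A$ or $\partial D_B$ joining two consecutive intersection points of $R$ with that disk having opposite signs. The hexagonal picture for $F$ then shows that between two adjacent strands of $R$ of opposite sign on the same disk boundary there sits exactly one strand of $\alpha$, so $|\omega_v \cap \alpha|=1$ by direct inspection. In the positive case, $\omega = \omega_h$ is horizontal with one endpoint on an $A^+A^-$-edge and one on a $B^+B^-$-edge of $\mathcal{D}$. A sign-by-sign analysis of the bands of connections of $R$, as in Figure~\ref{positiverwave}, together with the non-rectangular shape of $\alpha$, locates $\omega_h$ so that it passes through exactly one strand of $\alpha$; any configuration giving $\omega_h \cap \alpha = \varnothing$ would force $\alpha$ to be isotopic to a meridian of $H[R]$, contradicting the standing hypothesis of Theorem~\ref{main result}.

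I expect the main obstacle to be the handling of the two hexagonal faces $Hex_1$ and $Hex_2$: in the non-rectangular setting these are the only faces of the decomposition of $F$ that are not rectangles, and the combinatorial bookkeeping of how $R$ enters and exits them needs care, particularly when an endpoint of the wave lies near one of these hexagons or when $R$ has small parameters producing a cut vertex in $\mathcal{D}$. The delicate step, analogous to the hybrid-diagram arguments of Claims~\ref{claim1} and~\ref{claim2}, will be to reduce such degenerate configurations by an appropriate change of cutting disks of $H$ so that $\mathcal{D}$ becomes connected with no cut vertex and the distinguished wave is uniquely located, after which the intersection count $|\omega \cap \alpha|=1$ can be read off from the R-R diagram.
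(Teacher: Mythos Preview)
Your plan mirrors the rectangular case too closely and is far more laborious than what the paper actually does. The paper gives a short, case-free argument that works uniformly for the distinguished wave $\omega$, whether vertical or horizontal. The key observation you are missing is this: every point $p$ of $\omega \cap \alpha$ lies in the boundary of one of the \emph{rectangular} faces $R_p$ of the decomposition of $F$ in Figure~\ref{gamma1}, and that rectangle is traversed by an arc of $R \cap F$ running parallel to $\alpha$. Since $\omega$ is disjoint from $R$ but meets $\alpha$ at $p$, an \emph{endpoint} of $\omega$ must lie in $R_p$ on that parallel arc of $R$. Thus each intersection point of $\omega$ with $\alpha$ uses up one of the two endpoints of $\omega$, so $|\omega \cap \alpha| \le 2$. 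If $|\omega \cap \alpha| = 2$, the wave condition forces the two intersections to have opposite signs, so the algebraic intersection of the resulting meridian $M$ with $\alpha$ vanishes and $\alpha$ would be a meridian of $H[R]$, contradicting the hypothesis; $|\omega \cap \alpha| = 0$ gives the same contradiction directly. Hence $|\omega \cap \alpha| = 1$.

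By contrast, your proposal splits into nonpositive and positive subcases, promises a sign-by-sign band analysis in each, and anticipates hybrid-diagram reductions analogous to Claims~\ref{claim1} and~\ref{claim2} to eliminate cut-vertex degenerations. None of this is needed here: the paragraph preceding the claim already shows that $\mathcal{D}$ is connected with no cut vertex, so the wave exists without further reduction, and the hexagons play no role beyond the observation that intersection points of $\omega$ with $\alpha$ occur only in rectangular faces. Your approach could perhaps be pushed through, but it remains a sketch (``I expect the main obstacle to be\dots'') and bypasses the simple endpoint-counting idea that makes the non-rectangular case cleaner than the rectangular one rather than harder.
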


\begin{proof}[Proof of Claim \emph{\ref{walpha1}}]
If $\omega$ is disjoint from $\alpha$, then $\alpha$ is isotopic to a meridian of $H[R]$, a contradiction. Therefore $\omega$ intersects $\alpha$.

Suppose $p$ is a point of $\omega \cap \alpha$. Then $p$ lies in the boundary of a rectangular face, say $R_p$, of $F$. $R_p$ is
traversed by at least one connection of $R\cap F$ which we may assume has the same orientation as $\alpha$. But, since $p$ is
essential, one of the endpoints of $\omega$, say $p'$ must also lie in $R_p$ on a subarc of a connection of $R\cap F$. So
we have the configuration shown in Figure~\ref{gamma2}.

\begin{figure}[t]
\includegraphics[width = 0.8\textwidth]{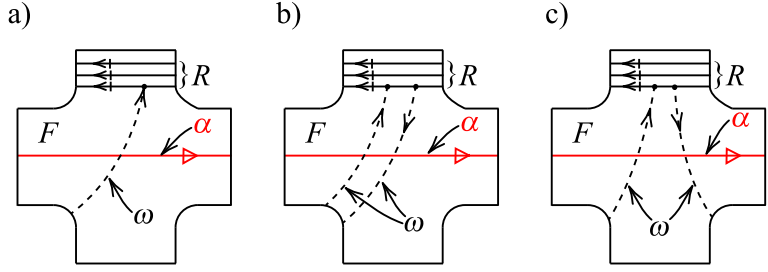}
\caption{The possible configurations of $\alpha$, $R$ and $\omega$ in $F$.}
\label{gamma2}
\end{figure}

Since $\omega$ is an arc and has only two endpoints, it follows that $\omega \cap \alpha$ consists of either one or two points, and if $\omega \cap \alpha$ consists of two points, then these two intersections have opposite signs because of the definition of a wave. However this
is impossible, because if $\omega \cap \alpha$ consists of two points of intersection with opposite signs, then the algebraic intersection number of $\omega$ and $\alpha$ is
equal to $0$, which implies that the geometric intersection number of a meridian representative $M$ and $\alpha$ is equal to $0$ and thus
$\alpha$ is a meridian of $H[R]$, a contradiction.
\end{proof}

Thus, we have completed the proof of Proposition~\ref{non-rectangular prop} and therefore Theorem~\ref{main result} when $\alpha$ is Seifert-d and is of a non-rectangular form.
\end{proof}

\section{The case when $\alpha$ is Seifert-$\mathrm{m}$}
\label{Seifert-m curves}

In this section, we prove Theorem~\ref{main result} when $\alpha$ is Seifert-m in a genus two handlebody $H$.
It follows from the classification theorem of Seifert-m curves in \cite{K20b} that $\alpha$ has an R-R diagram of the form in Figure~\ref{Seifert-m curves 1} with $S>1$.

\begin{prop}
Theorem~\emph{\ref{main result}} holds if $\alpha$ is Seifert-m.
\label{Seifert-m prop}
\end{prop}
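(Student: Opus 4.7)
The plan is to mirror the template used in Propositions~\ref{rectangular prop} and~\ref{non-rectangular prop}. First I would enumerate, up to the symmetries of the R-R diagram of $\alpha$ shown in Figure~\ref{Seifert-m curves 1} and up to relabelling of parameters, the possible R-R diagrams of a simple closed curve $R \subset \partial H$ disjoint from $\alpha$. The curve $\alpha$ forces the usual restriction that $R$ cannot simultaneously thread both bands of connections associated to the exceptional fiber of index $S$ (otherwise $R$ is forced to spiral and fails to close up as a simple closed curve), which cuts the list of candidate R-R diagrams for $R$ down to a short, explicit list. Any R-R diagram in which $R$ has at most one band of connections on one handle and at most two on the other gives a non-hyperbolic $H[R]$ by Proposition~\ref{nonhyperbolic R} and is discarded, and the case $R$ isotopic to $\alpha$ is discarded because $H[\alpha]$ is Seifert-fibered while $H[R]$ is hyperbolic.

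For each surviving R-R diagram of $R$, the argument splits by the polarity of the Heegaard diagram of $R$. If $R$ is nonpositive, the hypothesis $S>1$ combined with $R$ having mixed signs on one of the cutting disks ensures that the Heegaard diagram is connected and has no cut-vertex, so Theorem~\ref{waves provide meridians}(1) produces a distinguished vertical wave $\omega_v$ isotopic to a subarc of $\partial D_A$ or $\partial D_B$. Direct inspection of the R-R diagram, exactly as in the previous sections, shows $|\omega_v \cap \alpha|=1$ in every surviving configuration; any configuration forcing $\omega_v \cap \alpha = \varnothing$ would make $\alpha$ isotopic to a meridian of $H[R]$, contradicting the hypothesis. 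If $R$ is positive, Theorem~\ref{waves provide meridians}(2) produces a distinguished horizontal wave $\omega_h$ whose endpoints lie on bands of maximal label on each handle; the same sign-of-labels case analysis used in Figures~\ref{positiverwave}, \ref{horizontalwave1} and~\ref{horizontalwave2} locates $\omega_h$ in the R-R diagram and shows $|\omega_h \cap \alpha|=1$. In the non-rectangular spirit, one can equivalently think of this step as the argument of Claim~\ref{walpha1}: a geometric intersection count of $2$ would make the algebraic intersection $0$ (the endpoints of a wave have opposite signs), forcing $\alpha$ to be a meridian and contradicting the hypothesis.

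The main obstacle will be the degenerate cut-vertex subcases, where two labels on one handle of the R-R diagram of $R$ happen to cancel (the analogue of $R+Q=0$ in Claims~\ref{claim1} and~\ref{claim2}), so that Theorem~\ref{waves provide meridians} does not apply directly. These I would handle exactly as in Section~\ref{Seifert-d rectangular curves}: pass to the hybrid diagram, perform a change of cutting disks of $H$ inducing an automorphism of $\pi_1(H)$ of the form $A \mapsto A B^{\pm S}$ (with $B$ fixed), possibly followed by an orientation-reversing homeomorphism of $H$. Because $\alpha$ is Seifert-m, its word in $\pi_1(H)$ is preserved in form under this move, while the complexity $|AB|$ of $R$ strictly decreases. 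Iterating terminates and returns us to a non-degenerate R-R diagram of $(\alpha,R)$ already covered by the previous paragraph, producing the desired meridional curve $M$ with $|\alpha \cap M|=1$ and finishing the proof.
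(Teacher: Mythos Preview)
Your overall strategy matches the paper's, but there are two concrete gaps.

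First, the sentence ``a geometric intersection count of $2$ would make the algebraic intersection $0$ (the endpoints of a wave have opposite signs)'' is the crux, and it does not transfer from Claim~\ref{walpha1}. In the non-rectangular case that claim works because $\alpha$ lives in the once-punctured torus $F$, every intersection of $\omega$ with $\alpha$ lies in a rectangular face in which an arc of $R$ runs parallel to $\alpha$, and hence each intersection of $\omega$ with $\alpha$ is forced to sit next to an endpoint of $\omega$ on $R$ and inherit its sign. In the Seifert-m diagram (Figure~\ref{Seifert-m curves 1}) there is no such torus $F$ and no such parallelism, so nothing prevents $\omega_h$ from crossing $\alpha$ twice with the \emph{same} sign. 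The paper in fact exhibits exactly this: in Case~(2) with $c=0$ and $U<S$ (Figure~\ref{positivewavesMobius}b) the horizontal wave meets $\alpha$ in two points, and the case is eliminated not by your sign argument but by computing that one meridian representative obtained from the surgery is $M_1=(AB^U)^2$, so $H_1(H[M_1])$ has $2$-torsion and $H[M_1]$ cannot embed as a knot exterior in $S^3$. You need this (or an equivalent) obstruction; the Claim~\ref{walpha1} mechanism alone does not close the case.

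Second, your cut-vertex reduction uses the wrong automorphism. In the positive cut-vertex subcase of Case~(1) (Figure~\ref{no 2-connections}) the curve $R$ contains a subword $\cdots B^{S}AB^{T}AB^{S}\cdots$; the automorphism that helps is $A\mapsto AB^{-T}$ (with $T$ the label coming from $R$, not $S$ from $\alpha$), which turns this subword into one containing $A^{2}$ and thus pushes $R$ into Case~(2), while $\alpha=AB^{S}A^{-1}B^{S}$ is fixed by any $A\mapsto AB^{n}$. Your proposed $A\mapsto AB^{\pm S}$ sends the subword to $\cdots B^{S}AB^{\pm U}A\cdots$ and produces no $A^{2}$, so it does not land you in a case you have already handled; and your appeal to a strictly decreasing $|AB|$-complexity (as in Claim~\ref{claim2}) is not set up here, since the reduction is a single step from Case~(1) to Case~(2) rather than an iteration within one diagram type.
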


\begin{figure}[t]
\includegraphics[width = 0.55\textwidth]{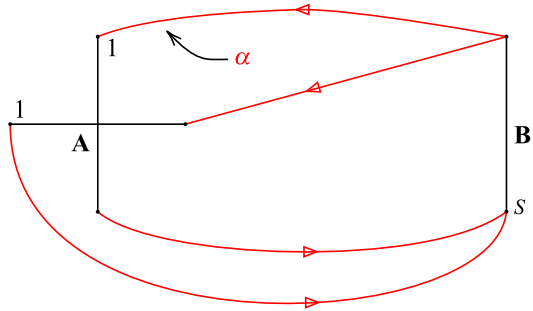}
\caption{An R-R diagram of a Seifert-m curve $\alpha$ on $\partial H$, where $S>1$.}
\label{Seifert-m curves 1}
\end{figure}

\begin{figure}[t]
\includegraphics[width = 0.6\textwidth]{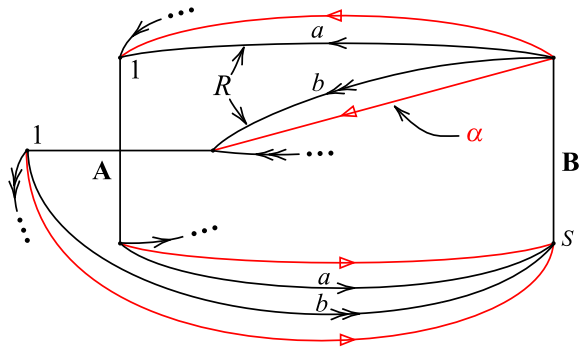}
\caption{If an R-R diagram, say $\mathcal{D}$, of a Seifert-m curve $\alpha$ and disjoint curve $R$ contains a subdiagram with the form of this figure with both $a > 0$ and $b > 0$, then the corresponding Heegaard diagram of $\mathcal{D}$ is connected, has no cut-vertices, and there is a vertical wave $\omega_v$ based at $R$ in $\mathcal{D}$ such that $\omega_v$ is disjoint from $\alpha$. (Note this is true even if the orientations of the $b$-weighted bands in this figure are reversed.)}
\label{Seifert-m curves 2}
\end{figure}

\begin{proof}

We observe from Figure~\ref{Seifert-m curves 1} that $\alpha$ has two bands of connections labelled by $1$ in the $A$-handle.

\begin{claim}\label{claim4}
$R$ must have only one band of connections labelled by $1$ in the $A$-handle.
\end{claim}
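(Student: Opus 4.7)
The plan is to argue by contradiction, ruling out the two extreme possibilities: that $R$ has no band of connections labelled by $1$ in the $A$-handle, and that $R$ has two such bands. Since $\alpha$ occupies the two $1$-labelled positions available in the $A$-handle (as visible in Figure~\ref{Seifert-m curves 1}), and $R$ must be disjoint from $\alpha$, a priori $R$ can carry at most two bands labelled by $1$ in the $A$-handle, so these two cases exhaust the complement of the claim.

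First I would dispose of the case in which $R$ has no connections in the $A$-handle at all. Then $R$ is disjoint from $\partial D_A$ and so represents a power of $B$ in $\pi_1(H)$. For such a curve to be simple and nonseparating on $\partial H$, it must be isotopic to a curve intersecting $\partial D_B$ once, and then $H[R]$ is a solid torus. Hence $k$ is the unknot, contradicting the hypothesis that $k$ is hyperbolic. (Alternatively, one can quote Proposition~\ref{nonhyperbolic R}, since $R$ would have at most two bands of connections supported entirely in the $B$-handle.)

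Next I would exclude the case in which $R$ has two bands of connections labelled by $1$ in the $A$-handle. Being disjoint from $\alpha$, these two bands must run parallel to the two $1$-labelled bands of $\alpha$, so the R-R diagram of $(\alpha,R)$ contains a subdiagram of precisely the form shown in Figure~\ref{Seifert-m curves 2}, with the $a$- and $b$-weights of $R$ both positive. The parenthetical observation recorded in the caption of that figure then supplies, regardless of the choice of orientations on the $b$-weighted bands, a distinguished vertical wave $\omega_v$ based at $R$ whose underlying Heegaard diagram is connected and has no cut-vertex and which is disjoint from $\alpha$. Theorem~\ref{waves provide meridians} applied to $\omega_v$ produces a meridian $M$ of $H[R]$ disjoint from $\alpha$, so $\alpha$ is isotopic to a meridian of $H[R]$, contradicting the standing assumption of Theorem~\ref{main result}.

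The main obstacle is justifying the Figure~\ref{Seifert-m curves 2} input rigorously: one has to verify that when $R$ doubles up on both of $\alpha$'s $1$-labelled bands, the resulting Heegaard diagram really is connected and cut-vertex-free for every possible orientation pattern of the $b$-weighted bands, and then one has to locate the unique vertical wave promised by Theorem~\ref{waves provide meridians}(1), confirming that it runs along a subarc of $\partial D_A$ between the two $1$-bands and is therefore disjoint from $\alpha$. Once this combinatorial verification is carried out, the two cases above together yield the claim.
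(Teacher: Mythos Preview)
Your case split is correct in outline, but the first case is handled incompletely. Saying ``$R$ has no band of connections labelled by $1$ in the $A$-handle'' is \emph{not} the same as saying $R$ has no connections in the $A$-handle at all. The $A$-handle in Figure~\ref{Seifert-m curves 1} carries labels $0$, $1$, $2$, so a curve $R$ disjoint from $\alpha$ may still enter the $A$-handle along $0$-labelled or $2$-labelled bands without using any $1$-labelled band. Your argument (and your alternative appeal to Proposition~\ref{nonhyperbolic R}) only covers the subcase in which $R$ misses the $A$-handle entirely.

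The paper's proof fills this gap as follows. If $R$ has only $0$-connections (or none) in the $A$-handle, then the diagram forces $R$ to use a single $S$-connection on the $B$-handle, so $R = B^S$ in $\pi_1(H)$ and $H_1(H[R])$ has torsion, contradicting that $H[R]$ is a knot exterior in $S^3$. If instead $R$ has only $2$-connections in the $A$-handle, one must look at the $B$-handle: if $R$ carries an $S$-connection there, the Heegaard diagram is nonpositive, connected, and cut-vertex-free, and the distinguished vertical wave $\omega_v$ is disjoint from $\alpha$, a contradiction; if $R$ carries no $S$-connection, then $R$ has one band on the $A$-handle and at most two on the $B$-handle, so Proposition~\ref{nonhyperbolic R} applies and $H[R]$ is nonhyperbolic. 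Your treatment of the two-band case is essentially the paper's, but the omitted subcases above are where most of the work in this claim actually lies.
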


\begin{proof}
Suppose for a contradiction that $R$ has no $1$-connections in the $A$-handle or
$R$ has the two bands of connections labelled by $1$ in the $A$-handle.

First, suppose $R$ has no $1$-connections in the $A$-handle.
If $R$ has either no connections or only $0$-connections in the $A$-handle,
then $R$ should have only one $S$-connection in the $B$-handle, which implies that $R=B^S$ in $\pi_1(H)$.
This is impossible since $H[R]$ embeds as a knot exterior in $S^3$ and thus $H_1(H[R])$ is torsion-free.
Thus $R$ has only $2$-connections in the $A$-handle. If $R$ has a $S$-connection in the $B$-handle, then it is easy to see
that the Heegaard diagram of $R$ is nonpositive, connected and has no cut-vertex. Thus there exists a distinguished vertical wave $\omega_v$ yielding a meridian of $H[R]$.
It follows from the R-R diagram of $\alpha$ that $\omega_v$ does not intersect $\alpha$, which is a
contradiction. Now $R$ has only two bands of connections in the $B$-handle. However, this also cannot happen by Proposition~\ref{nonhyperbolic R}
indicating that $H[R]$ is not hyperbolic.

Now we suppose that $R$ has the two bands of connections labelled by $1$ in the $A$-handle.
Orient $R$ so that the labels at the ends of the two bands where $R$ enters are either both 1, or 1 and $-1$.
If the two labels are 1 and $-1$, then an R-R diagram of $\alpha$ and $R$ contains
a subdiagram with the form of Figure~\ref{Seifert-m curves 2} with both $a>0$ and $b>0$. Thus Heegaard diagram of $R$ is nonpositive and also is connected and has no cut-vertex.  It follows that a distinguished vertical wave $\omega_v$ yielding a meridian of $H[R]$ does not intersect $\alpha$, a contradiction.

If the two labels are both 1, then it follows from the R-R diagram of $\alpha$ that
$R$ must have both $S$- and $(-S)$-connections. Note that in this case an R-R diagram of $\alpha$ and $R$ also contains a subdiagram with the form of Figure~\ref{Seifert-m curves 2} with both $a>0$ and $b>0$ and with orientations of the b-weighted bands reversed.
This implies that $R$ is nonpositive,
is connected and has no cut-vertex. By the similar argument above, a distinguished vertical wave $\omega_v$ yielding a meridian of $H[R]$ does not intersect $\alpha$, a contradiction.
\end{proof}

\begin{figure}[t]
\includegraphics[width = 0.65\textwidth]{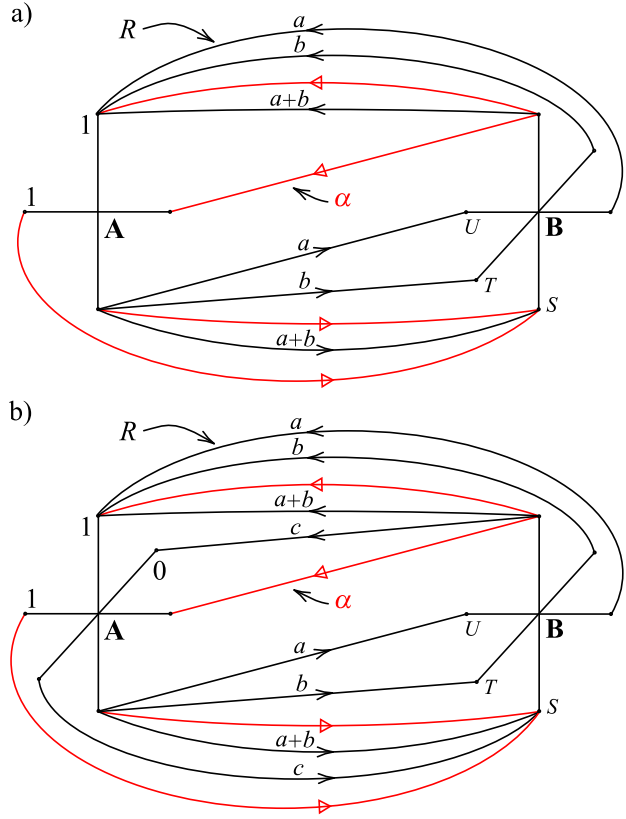}
\caption{The two R-R diagrams of disjoint curves $\alpha$ and $R$ in which $\alpha$ is Seifert-m and $R$ has no $2$-connections on the A-handle.}
\label{no 2-connections}
\end{figure}

By Claim~\ref{claim4}, $R$ has only one band of connections with label $1$ in the $A$-handle. There are two bands of connections with label $1$ in
the $A$-handle in the R-R diagram of $\alpha$: say, vertical and horizontal. Applying an orientation-reversing homeomorphism of $H$, if necessary, we may assume
without loss of generality that $R$ has vertical $1$-connections in the $A$-handle.
Now we break the argument into two cases: (1) $R$ has no $2$-connections and (2) $R$ has $2$-connections in the $A$-handle.

\smallskip
\textbf{Case (1): R has no 2-connections in the A-handle.}
\smallskip

There are two possible R-R diagrams of $R$ as shown in Figure~\ref{no 2-connections} depending on whether or not $R$ has $0$-connections. Note that $a,b>0$ in the R-R diagram of Figure~\ref{no 2-connections}a and $a+b>0$
in the R-R diagram of Figure~\ref{no 2-connections}b, because otherwise $H[R]$ is not hyperbolic.

If $R$ in Figure~\ref{no 2-connections} is nonpositive, then it is easy to see from the R-R diagrams that a distinguished vertical wave $\omega_v$
yielding a meridian of $H[R]$ intersects $\alpha$ transversely at a point.

If $R$ in Figure~\ref{no 2-connections} is positive, then the Heegaard diagram of $R$ has a cut-vertex.
Since $a+b>0$ in the R-R diagram of Figure~\ref{no 2-connections}b, either $a>0$ or $b>0$.
Without loss of generality we may assume that $b>0$. Therefore $b>0$ in both of the R-R diagrams in Figure~\ref{no 2-connections}, which implies $R$ has a subword $\cdots B^SAB^TAB^S\cdots$. As we did in Claim~\ref{claim1} in Section~\ref{Seifert-d rectangular curves}, we perform a change of cutting disks of $H$ inducing
an automorphism of $\pi_1(H)$ taking $A\mapsto AB^{-T}$. Using a hybrid diagram
we can see that since $\alpha=AB^SA^{-1}B^S$ in $\pi_1(H)$,
under this automorphism $\alpha$ remains same, i.e., $\alpha$ has the same form of R-R diagram in Figure~\ref{Seifert-m curves 1} while since the subword $\cdots B^SAB^TAB^S\cdots$ of $R$ is sent to $\cdots B^{-U}A^2B^{-U}\cdots$, $R$ is transformed into a simple closed curve whose word in $\pi_1(H)$ contains $A^2$. This implies that $R$ has $2$-connections in the $A$-handle. So this case
belongs to Case (2) where $R$ has $2$-connections in the $A$-handle, which is handled next.

\begin{figure}[t]
\includegraphics[width =0.65\textwidth]{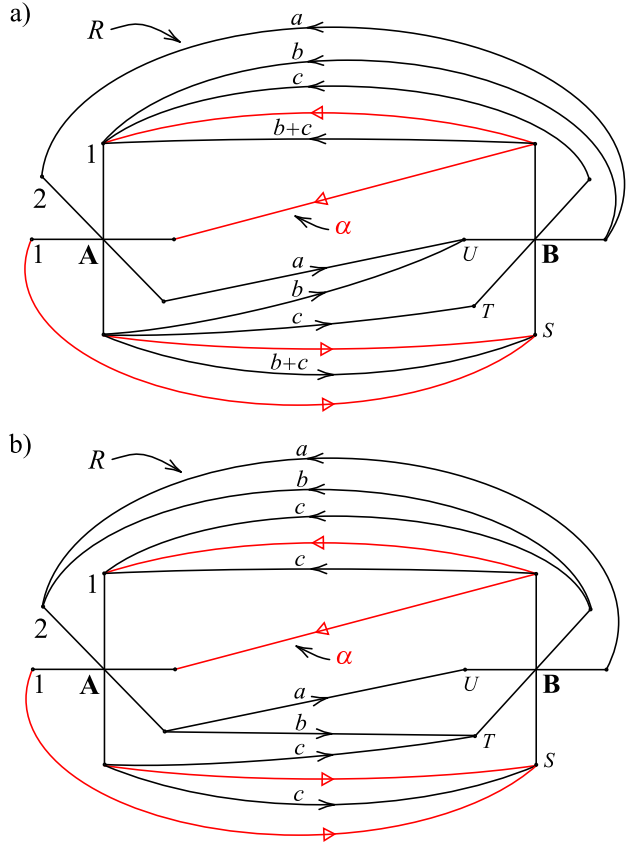}
\caption{The two R-R diagrams of disjoint curves $\alpha$ and $R$ in which $\alpha$ is Seifert-m and $R$ has $2$-connections on the A-handle.}
\label{with 2-connections}
\end{figure}

\smallskip
\textbf{Case (2): R has 2-connections in the A-handle.}
\smallskip

There are two possible R-R diagrams of $R$ as illustrated in Figure~\ref{with 2-connections}. Note that $a, b>0$ in Figure~\ref{with 2-connections}a and $b, c>0$ in Figure~\ref{with 2-connections}b. This is because for the R-R diagram of $R$ in Figure~\ref{with 2-connections}a, since $R$ has 2-connections in the $A$-handle, $a>0$. If $b=0$ there,
then since $R$ is a simple closed curve, $c=0$, which implies by Proposition~\ref{nonhyperbolic R} that $H[R]$ is not hyperbolic. For the R-R diagram of $R$ in Figure~\ref{with 2-connections}b, if $c=0$, then Proposition~\ref{nonhyperbolic R} implies that $H[R]$ would not be hyperbolic. If $b=0$ there, then $a=0$ and thus $H[R]$ is not hyperbolic.

\begin{figure}[t]
\includegraphics[width = 0.65\textwidth]{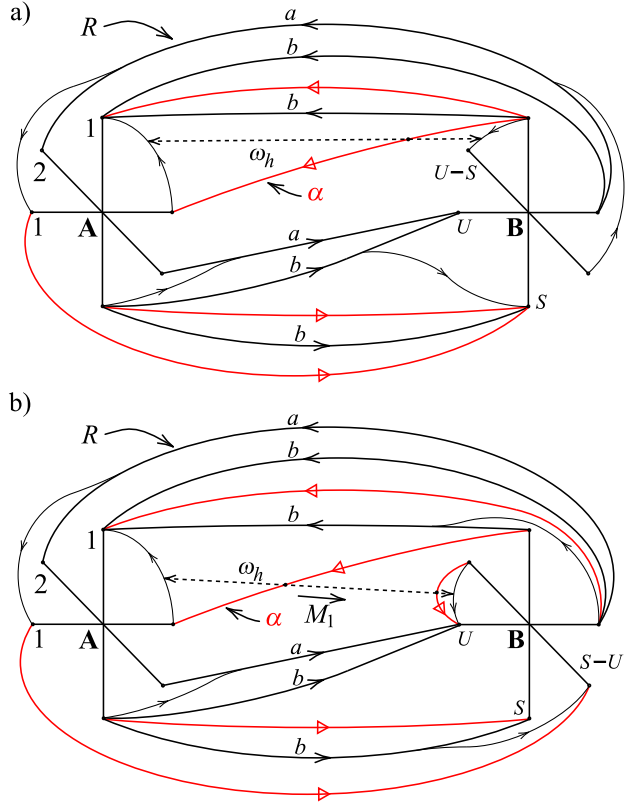}
\caption{Horizontal waves $\omega_h$ in R-R diagrams of $\alpha$ and $R$ when $U>S$ in Figure~\ref{positivewavesMobius}a, and $U<S$ in Figure~\ref{positivewavesMobius}b.}
\label{positivewavesMobius}
\end{figure}

If $R$ in Figure~\ref{with 2-connections} is nonpositive, as in the case (1), there exists a distinguished vertical wave $\omega_v$ yielding a meridian of $H[R]$ which intersects $\alpha$ transversely once.

We assume that $R$ in Figure~\ref{with 2-connections} is positive. From the conditions that $a, b>0$ in Figure~\ref{with 2-connections}a and $b, c>0$ in Figure~\ref{with 2-connections}b, it follows that the Heegaard diagrams of $R$ are connected and has no cut-vertex. Therefore there exists a distinguished horizontal wave $\omega_h$ yielding a meridian of $H[R]$.

If $c>0$($a>0$, resp.) in Figure~\ref{with 2-connections}a(\ref{with 2-connections}b, resp.), then $R$ has all of the three bands of connections of labels $U, T, S$ in the $B$-handle. Since $R$ is positive, all of $U,T,$ and $S$ are positive and thus $T$ is the maximal label of connections in the $B$-handle. Therefore, as in Figure~\ref{positiverwave} or in Figure~\ref{horizontalwave1} a horizontal wave $\omega_h$ can be located in the R-R diagram of $R$
by isotoping the $2$-connection and the $T$-connection in the $A$- and $B$-handle respectively. Then we can see that $\omega_h$ intersects $\alpha$ once.

If $c=0$($a=0$, resp.) in Figure~\ref{with 2-connections}a(\ref{with 2-connections}b, resp.),
then the two R-R diagrams in Figure~\ref{with 2-connections} have the same form. In other words,
the R-R diagram of Figure~\ref{with 2-connections}b with $a=0$ is the R-R diagram of Figure~\ref{with 2-connections}a with $c=0$ by replacing $(b,c,T)$ by $(a,b,U)$.
Therefore we focus only on the R-R diagram of $R$ in Figure~\ref{with 2-connections}a with $c=0$.
Locating a horizontal wave $\omega_h$ in the R-R diagram of $R$ depends on the sizes of $U$ and $S$ in the $B$-handle. Figure~\ref{positivewavesMobius}a(\ref{positivewavesMobius}b, resp.) shows $\omega_h$ when $U>S$($U<S$, resp.). It follows that when $U>S$, $\omega_h$ intersects $\alpha$ at a point.
On the other hand, when $U<S$, $\omega_h$ intersects $\alpha$ twice as shown in Figure~\ref{positivewavesMobius}b, where one $S$-connection of $\alpha$ is isotoped. However,
it is easy to see from the R-R diagram that one meridian representative $M_1$ obtained from $R$ by surgery along $\omega_h$ represents $AB^UAB^U$ in $\pi_1(H)$.
This is impossible because $H[M_1]$ also embeds in $S^3$ as a knot exterior and thus $H_1(H[M_1])$ is torsion-free.

Thus, we have completed the proof of Proposition~\ref{Seifert-m prop} and therefore Theorem~\ref{main result} when $\alpha$ is Seifert-m.
\end{proof}


\end{document}